\title[Stable blowup under random perturbations]{Stable blowup for the focusing energy critical nonlinear wave equation under random perturbations}
\author{Bjoern Bringmann}
\newcommand{\Addresses}{{
  \bigskip\bigskip \small

  Bjoern Bringmann, \textsc{University of California, Los Angeles, Department of Mathematics, 520 Portola Plaza, Los Angeles, CA 90095}\\\nopagebreak
  Email address: \texttt{bringmann@math.ucla.edu}

}}
\begin{document}
\maketitle
\begin{abstract}
We consider the radial focusing energy critical nonlinear wave equation in three spatial dimensions. We establish the stability of the ODE-blowup under random perturbations below the energy space. The argument relies on probabilistic Strichartz estimates in similarity coordinates.
\end{abstract}
\setcounter{tocdepth}{1}
\tableofcontents
\section{Introduction}

We consider the focusing quintic nonlinear wave equation in three dimensions
\begin{equation}\label{nlw}
\begin{cases} 
- \partial_t^2 u(t,x) + \Delta u(t,x)= - u(t,x)^5 \qquad (t,x) \in \R \times \R^3, \\
u(0,x)=u_1 \in H^s(\R^3), \qquad \partial_t u(0,x) =u_2 \in H^{s-1}(\R^3). 
\end{cases}
\end{equation}

Here, \( s>0 \) is the regularity and \( H^{s}(\R^3) \) denotes the inhomogeneous Sobolev space with \( s\)-derivatives. The flow of the nonlinear wave equation \eqref{nlw} conserves the energy 
\begin{equation}\label{energy}
E[u]=E[u](t) := \int_{\R^3}  \frac{(\partial_t u(t,x))^2}{2} + \frac{|\nabla_x u(t,x)|^2}{2} - \frac{u(t,x)^6}{6} \dx 
\end{equation}
The nonlinear wave equation \eqref{nlw} admits the scaling symmetry \( u(t,x) \mapsto u_\lambda(t,x):= \lambda^{\frac{1}{2}} u(\lambda t, \lambda x) \). Since the scaling symmetry also preserves the energy of the solution, the equation \eqref{nlw} is energy critical.\\ 

The focusing nonlinear wave equation \eqref{nlw} displays a variety of different dynamical behaviors, such as scattering, solitons, or finite-time blowup. While we will also comment on scattering and solitons below, the main focus of this paper lies on the formation of finite-time blowup. In certain situations, blowup may simply indicate a breakdown of the underlying model. In several applications, however, blowup or singularity formation describes real physical phenomena. It is related to the self-focusing effect in nonlinear optics  \cite{BLS09} and the formation of black holes through gravitational collapse \cite{HE73}. Due to these physical phenomena, we are not only interested in the existence of blowup, but also care about the blowup profile and its stability properties. 
We now focus on the so-called \emph{ODE-blowup}, which is given by
\begin{equation}\label{intro:ode_blowup}
u^{(T)}(t,x) := \kappa (T-t)^{-\frac{1}{2}}, \qquad \text{where } T>0 ~ \text{and}~ \kappa := \Big( \frac{3}{4} \Big)^{\frac{1}{4}}. 
\end{equation}
While \eqref{intro:ode_blowup} does not exhibit any spatial decay and hence does not belong to any Sobolev space, we can use finite speed of propagation to localize \( u^{(T)} \) to a lightcone. There is a large amount of literature on stable blowup in nonlinear wave equations and we refer the interested reader to \cite{DZ18,DS12,DS14,DS16,KST09,MZ03,MZ05,MZ07}. The techniques and results used in this paper are closely related to previous work by Donninger \cite{Donninger17}. He proved that the one-parameter family \eqref{intro:ode_blowup} is stable under small \emph{radial} perturbations in the energy space \( H^1(\R^3) \times L^2(\R^3) \). In light of the breakdown of (deterministic) local well-posedness below the energy space (see e.g. \cite{CCT03}), we expect this to be the optimal regularity. Without the radial symmetry assumption, Donninger and Schörkhuber \cite{DS16} proved the stability of \eqref{intro:ode_blowup} under small perturbations in \( H^2(\R^3) \times H^1(\R^3)\). \\

Let us now briefly discuss scattering and solitons. This behavior is intimately tied to the ground state $W$, which is given by the explicit formula  \( W(x):= (1+|x|^2/3)^{-\frac{1}{2}} \). Up to scaling and a sign change, it is the unique radial solution in \( \dot{H}^1(\R^3) \) of the elliptic equation
\begin{equation*}
\Delta W(x) = - W(x)^5. 
\end{equation*}
In a seminal paper \cite{KM08},  Kenig and Merle proved that any initial data satisfying 
\begin{equation*}
E[u_1,u_2] < E[W,0] \qquad \text{and} \qquad \| \nabla u_1 \|_{L^2} < \| \nabla W\|_{L^2}
\end{equation*}
leads to a global solution which scatters as \( t \rightarrow \pm \infty \). In contrast, the ground state \( W \) itself leads to a stationary solution and hence does not scatter. By applying the scaling symmetry or Lorentz transformations to the ground state \( W \), one can generate a whole family of traveling wave solutions, which are also called solitons. As for the ODE-blowup, we are interested in the stability of the soliton evolution under small perturbations. In \cite{Beceanu14,KNS15}, it is proven (in different topologies) that the evolution of the solitons has a single unstable direction and is stable with respect to perturbations in a Lipschitz manifold of codimension one.  We also mention important progress on the soliton resolution conjecture   by Duyckaerts, Kenig, and Merle \cite{DKM17,DKM11,DKM12,DKM13}. \\

Throughout the last decade, there has been growing interest in a probabilistic approach to nonlinear dispersive equations. In contrast to a deterministic well-posedness or stability theory, which has to apply to every initial data in the relevant function space, a probabilistic approach is only concerned with random initial data. In physical applications, this randomness may be a result of microscopic fluctuations in temperatures or densities. As a result, the random initial data of interest only belongs to low-regularity spaces, which often lie below the (deterministic) regularity threshold. This approach first appeared in seminal work of Bourgain \cite{Bourgain94,Bourgain96} and Burq-Tzvetkov \cite{BT08a,BT08b}. A recent comprehensive survey can be found in \cite{BOP18} and we also refer the reader to the related work in the context of nonlinear wave equations \cite{BB14,Bringmann18b,Bringmann18,BT14,S11,DLM18,DLM17,LM13,LM16,OP16,Pocovnicu17}. 

Most previous work on probabilistic well-posedness for energy critical nonlinear wave equations has dealt with defocusing nonlinearities, where the \( -u^5 \) in \eqref{nlw} is replaced by \( +u^5 \). For initial data in the energy space, it is well-known that all solutions of the defocusing equation are global and scatter as \( t \rightarrow \pm \). A natural question is whether global well-posedness and scattering are stable under random perturbations of the initial data. More precisely, we assume that the random initial data is of the form
\begin{equation}
 (u_1 + f_1^\omega, u_2 + f_2^\omega),
\end{equation}
where \( (u_1,u_2) \in \dot{H}^1(\R^3) \times L^2(\R^3) \), \( (f_1,f_2) \in H^s(\R^3) \times H^{s-1}(\R^3) \) with \( 0\leq s < 1 \), and \( (f_1^\omega,f_2^\omega) \) is a randomized version of \( (f_1,f_2) \). For an exact definition of the randomization, we refer to Definition \ref{intro:definition_randomization} and Remark \ref{intro:remark_randomization} below. In \cite{Pocovnicu17}, Pocovnicu proved almost sure global well-posedness for the defocusing energy critical nonlinear wave equation in spatial dimensions \( d=4,5 \) for \( s >0 \). In particular, this result lies almost a full derivative below the deterministic threshold. In \cite{OP16}, Oh and Pocovnicu proved the same result in three spatial dimensions under the stronger condition \( s > 1/2 \). While both results yield global solutions, they do not provide much information on the asymptotic behavior as \( t \rightarrow \pm \). The stability of the scattering mechanism under random perturbations was first proved by Dodson, Lührmann, and Mendelson \cite{DLM18,DLM17}. Their result applies in four spatial dimensions and requires the spherical symmetry  condition \( (f_1,f_2) \in H^s_{\rad}(\R^4) \times H^{s-1}_{\rad}(\R^4)\), where \( s > 0 \). This result was extended to the three dimensional case by the author \cite{Bringmann18}, but it requires the stronger condition \( s > 11/12 \). Without the spherical symmetry assumption, almost sure scattering was subsequently proved by the author (with \( d=4 \) and \( s > 11/12 \)) in \cite{Bringmann18c}. Similar results were also obtained for the defocusing energy critical nonlinear Schrödinger equations in \cite{DLM18,KMV19,OOP17}.

Unfortunately, the focusing nonlinear wave equation \eqref{nlw} with random initial data is less understood. One natural question is to consider the stability of the special families of solutions, such as solitons or the ODE-blowup, under random perturbations of the initial data. In \cite{KM19}, Kenig and Mendelson answered this question for random and radial perturbations of the  soliton \( W \). They treat random perturbations in a weighted Sobolev space with regularity \( s > 5/6 \). Due to the unstable direction, however, the random perturbation also has to be projected onto a Lipschitz manifold of codimension one. Inspired by Kenig and Mendelson's result for solitons, the main result of this paper proves the stability of the ODE-blowup under random and radial perturbations. \\

Before we state the main theorem, we define the random initial data \( (f_1^\omega,f_2^\omega) \). 
\begin{definition}[Radial randomization \cite{Bringmann18}]\label{intro:definition_randomization}
Let \( s \in \R \), let \( f \in H^s_{\rad}(\R^3)\), and let \( (X_n)_{n=0}^\infty \)  be a sequence of independent, standard real-valued Gaussians. We define the radial randomization \( f^\omega \) by 
\begin{equation}
\widehat{f^\omega}(\xi) := \sum_{n=0}^\infty X_n(\omega) 1_{[n,n+1)}(\|\xi\|_2) \widehat{f}(\xi). 
\end{equation}
\end{definition}

\begin{remark}\label{intro:remark_randomization}
The radial randomization is based on a decomposition of frequency space into annuli of width one. It first appeared in \cite{Bringmann18} and a similar randomization (using the distorted Fourier transform) was used by Kenig and Mendelson in \cite{KM19}. It is inspired by earlier the Wiener randomization \cite{BOP14,LM13}, which is based on a decomposition of frequency space into unit-scale cubes. We also refer the interested reader to the physical randomization in \cite{Murphy19}, the microlocal randomization in \cite{Bringmann18c}, and a randomization based on good frames in \cite{BK19}. \\
Instead of Gaussian random variables,  it suffices to assume that the sequence \( (X_k)_{k=0}^\infty \) is independent and uniformly sub-gaussian (see Definition \ref{prelim:def_subgaussian}). 
\end{remark}

We now consider the random data Cauchy problem
\begin{equation}\label{rnlw}
\begin{cases} 
- \partial_t^2 u(t,x) + \Delta u(t,x)= - u(t,x)^5 \qquad  (t,x) \in \R \times \R^3, \\
u(0)=u^{(1)}(0)+ f_{1}^\omega, ~ \partial_t u (0)= \partial_t u^{(1)}(0)+ f_{2}^\omega.
\end{cases}
\end{equation}
Here, \( u^{(1)} \) is the ODE-blowup \eqref{intro:ode_blowup} with \( T=1 \) and \( (f_1,f_2)\in H^s_{\rad}(\R^3) \times H^{s-1}_{\rad}(\R^3) \). As in previous work on the stability of the ODE-blowup \cite{Donninger17,DS12}, our main theorem is stated in terms of the evolution inside a light cone. To this end, we define
\begin{equation}
\CT := \{ (t,x) \in [0,T] \times \R^3 \colon \| x\|_2 \leq T-t \}. 
\end{equation}
\newpage
\begin{theorem}\label{main_theorem}
Let \( s > 7/10 \), let \( (f_1,f_2) \in H^s_{\rad}(\R^3) \times H^{s-1}_{\rad}(\R^3) \), let \( 0 < \delta \leq \delta_0 \) be sufficiently small, and let \( c>0 \) be a small absolute constant. With probability greater than or equal to 
\begin{equation}\label{probability}
1- c^{-1} \exp\Big( - c \delta^2 \| (f_1,f_2) \|_{H^s\times H^{s-1}}^{-2} \Big), 
\end{equation}
there exists a (random) blowup time \( T \in [1-\delta,1+\delta] \) and a solution \( u \colon \CT \rightarrow \R \) of \eqref{rnlw} satisfying
\begin{equation}\label{intro:eq_bound}
\| (T-t)^{-\frac{3}{4}} (u - u^{(T)}) \|_{L_t^2 L_x^4(\CT)} + \| u - u^{(T)} \|_{L_t^5 L_x^{10}(\CT)} \lesssim \delta .
\end{equation}
\end{theorem}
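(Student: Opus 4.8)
The plan is to work in similarity (self-similar) coordinates adapted to a candidate blowup time $T$, following the framework of Donninger \cite{Donninger17}. First I would pass from the Cauchy problem \eqref{rnlw} to an equation for the perturbation $v := u - u^{(T)}$, localized to the light cone $\CT$. After introducing the similarity variables $\tau = -\log(T-t)$, $y = x/(T-t)$, the ODE-blowup $u^{(T)}$ becomes the constant equilibrium $\kappa$, and the linearized evolution around it is governed by a generator $L = L_0 + L'$ on a Hilbert space of radial functions on the unit ball $\mathbb B^3$, where $L_0$ is the free self-similar wave generator and $L'$ is the potential term coming from linearizing $-u^5$ at $u = \kappa$. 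The key spectral fact, which I would quote from \cite{Donninger17}, is that $L$ generates a strongly continuous semigroup that is exponentially decaying on the codimension-one (or finite-codimension) subspace complementary to the unstable eigenmode(s); the unstable mode is exactly the gauge freedom associated with the blowup time $T$, and will be absorbed by choosing $T = T(\omega) \in [1-\delta, 1+\delta]$ via a Brouwer/fixed-point argument at the end.

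The new probabilistic ingredient is the treatment of the random perturbation $(f_1^\omega, f_2^\omega)$. The idea is the standard Bourgain–type decomposition $u = u^{(T)} + u^{\mathrm{lin},\omega} + v$, where $u^{\mathrm{lin},\omega}$ is the \emph{free} linear wave evolution of the random data $(f_1^\omega, f_2^\omega)$ (suitably localized to the cone), and $v$ is the nonlinear remainder, which one expects to be smoother. The first main step is then to establish, using Definition \ref{intro:definition_randomization} and the sub-gaussian hypothesis of Remark \ref{intro:remark_randomization}, probabilistic Strichartz estimates for $u^{\mathrm{lin},\omega}$ \emph{in similarity coordinates} — i.e. control of $\|(T-t)^{-3/4} u^{\mathrm{lin},\omega}\|_{L_t^2 L_x^4(\CT)}$ and $\|u^{\mathrm{lin},\omega}\|_{L_t^5 L_x^{10}(\CT)}$ (and of the relevant derivative/weighted norms needed to close the nonlinear estimates) by $\lesssim \delta$ on the high-probability event in \eqref{probability}. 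This uses Khintchine's inequality to reduce to an $L^2_\omega$ (square-function) bound, then a deterministic estimate on each frequency annulus: the gain relative to the deterministic theory comes from the fact that the randomization decouples the annuli, so one only needs $H^s$ with $s > 7/10$ rather than $s \geq 1$, after summing the geometric-type series over dyadic frequencies against the $L^4_t$/$L^{10}$ Strichartz exponents in self-similar variables. The precise exponent $7/10$ should emerge from balancing the Strichartz admissibility in the $L_t^2 L_x^4$ endpoint against the $\delta$-sized time window; I would isolate this as a standalone proposition before touching the nonlinear problem.

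With the linear random piece controlled, the second main step is the deterministic contraction for the remainder $v$. In similarity coordinates $v$ solves $\partial_\tau \Phi = L \Phi + N(\Phi) + F^\omega$, where $N$ collects the genuinely nonlinear terms in $v$ (quadratic through quintic), and $F^\omega$ collects all terms involving at least one factor of $u^{\mathrm{lin},\omega}$; by the probabilistic step $F^\omega$ is a forcing term of size $\lesssim \delta$ in the relevant norm. Using the Duhamel formula for the semigroup generated by $L$, the spectral decay on the stable subspace, and the modulation equation that fixes $T$ to kill the unstable projection, I would set up a fixed-point argument in a ball of radius $\sim \delta$ in the mixed-norm space dictated by \eqref{intro:eq_bound} together with whatever auxiliary higher-regularity norm is needed to estimate the quintic term (here the cone structure and the $L_t^5 L_x^{10}$ Strichartz norm do most of the work, since $\|v^5\|$ in the dual norm is controlled by $\|v\|_{L_t^5L_x^{10}}^5$ and the cross terms $v^k (u^{\mathrm{lin},\omega})^{5-k}$ by Hölder against the probabilistic bounds). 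The nonlinear and forcing terms are each $O(\delta^2)$ or $o(\delta)$, so the map is a contraction; the Brouwer step on the one-dimensional unstable parameter produces the admissible $T \in [1-\delta, 1+\delta]$, and undoing the change of variables gives \eqref{intro:eq_bound}.

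The main obstacle, and the technical heart of the paper, is the probabilistic Strichartz estimate in similarity coordinates: self-similar variables distort the Fourier support structure on which the randomization is built (frequency annuli are not preserved by the time-dependent rescaling $y = x/(T-t)$ on the cone), so transferring the unit-annulus decoupling of Definition \ref{intro:definition_randomization} into a usable square-function bound against the $L_t^p L_x^q(\CT)$ norms requires care — one must either work in the original coordinates and carefully track how the cone localization interacts with frequency projections, or develop Strichartz estimates directly for the self-similar free evolution. A secondary difficulty is compatibility of the random forcing $F^\omega$ with the finite-codimension spectral subspace: one must check that projecting onto the stable subspace and the modulation equation for $T$ remain well-posed when the data is only $H^s$ with $s > 7/10$, rather than energy-class. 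I expect the rest — the semigroup bounds, the deterministic nonlinear estimates on the cone, and the Brouwer argument — to be essentially as in \cite{Donninger17,DS16}, adapted to the Strichartz-type norms in \eqref{intro:eq_bound}.
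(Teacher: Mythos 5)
Your proposal follows essentially the same route as the paper: the three-term decomposition $u = u^{(T)} + f^{\omega} + v$ with the \emph{free} wave evolution of the random data (so that only probabilistic Strichartz estimates for the free propagator, transferred to similarity coordinates by working on the cone in Cartesian variables, are needed), Donninger's deterministic semigroup and Strichartz theory for $S(\tau)$ used as a black box, a contraction in a $\delta$-ball for the modified Duhamel equation with the unstable mode projected out, and an intermediate-value choice of $T \in [1-\delta,1+\delta]$ to remove the instability. The only small correction is that $S(\tau)(I-P)$ is merely uniformly bounded on the energy space (Lemma \ref{re:lem_semigroup}), not exponentially decaying, but this does not affect the scheme since the fixed-point argument runs on the global-in-$\tau$ Strichartz bounds of Proposition \ref{re:prop_deterministic_strichartz} together with the weighted $L_\tau^1 L_y^2$ control of the random forcing, exactly as you anticipate.
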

\begin{remark}\label{remark:different_norms_theorem}
Using the explicit expression \eqref{intro:ode_blowup}, we see that 
\begin{equation*}
  \| (T-t)^{-\frac{3}{4}} u^{(T)} \|_{L_t^2 L_x^4(\CT)}  = \| u^{(T)} \|_{L_t^5 L_x^{10}(\CT)} = \infty. 
\end{equation*}
Thus, the estimate \eqref{intro:eq_bound} shows that \( u^{(T)} \) and \( u\) agree at the top order. In the deterministic setting, Donninger \cite{Donninger17} also controlled the difference \( u-u^{(T)} \) in \( L_t^2 L_x^\infty \), which is stronger than the weighted \( L_t^2 L_x^4\)-bound. Unfortunately, this bound is not available in our setting, see Remark \ref{prob:rem_probabilistic_gain}. \\
We emphasize that the lower bound on the probability \eqref{probability} is close to one as long as the data is much smaller than \( \delta \). 
\end{remark}

\begin{remark}
In \cite{OOP17}, Oh, Okamoto, and Pocovnicu consider the energy critical nonlinear Schrödinger equation without gauge invariance on \( \R^d \) with \( d=5,6 \), that is,
\begin{equation}\label{non_gauge_nls}
\begin{cases} 
i \partial_t u(t,x) +\Delta u(t,x)= \lambda \, |u(t,x)|^{\frac{d+2}{d-2}} \qquad (t,x) \in \R \times \R^d, \\
u(0,x)= u_0(x), 
\end{cases}
\end{equation}
where \( \lambda \in \C \backslash \{ 0 \} \). In an earlier deterministic work by Ikeda and Inui \cite{II15}, the test function method was used to show that regular initial data satisfying a sign condition and lower bounds (see \cite[(1.5)]{II15}) leads to finite-time blowup of \eqref{non_gauge_nls}. Similar as in Theorem \ref{main_theorem}, \cite{OOP17} shows that a random perturbation of the initial data from \cite{II15} still leads to finite-time blowup. The construction of the blowup, however, is different from the deterministic methods in \cite{Donninger17,DS12} and arguments in this paper. In particular, while \cite{II15,OOP17} prove the existence of finite-time blowup, Theorem \ref{main_theorem} also characterizes the blowup profile. 
\end{remark}

Before the end of this introduction, we present an overview of the rest of the paper. In Section \ref{section:prelim}, we recall a few basic facts from harmonic analysis and probability theory. In Section \ref{section:reformulation}, we perform several changes of variables. We first utilize the Bourgain-Da Prato-Debussche trick \cite{Bourgain96,DD02}, which converts the random and rough initial data in \eqref{rnlw} into a random and rough forcing term. We then switch from Cartesian into similarity coordinates and perturb around the ODE-blowup. Due to the time-translation invariance, the resulting one-parameter semigroup has one unstable mode. As in \cite{Donninger17,DS12}, we therefore first work with a modified Duhamel integral equation and then determine the blowup time through a soft argument. In Section \ref{section:strichartz}, we prove probabilistic Strichartz estimates for the free wave equation in similarity coordinates. We rely on bounds for annular Fourier multipliers on spaces of radial functions and probabilistic Strichartz estimates in Cartesian coordinates from the author's earlier work \cite{Bringmann18}. In \mbox{Section \ref{section:nonlinear}}, we use the probabilistic Strichartz estimates to solve the nonlinear problem. 

An earlier version of this paper \cite{Bringmann20} contained a much longer proof of Theorem \ref{main_theorem}. We postpone a more detailed comparison until Section \ref{section:comparison}, but already mention that the main difference lies in the order of using the Bourgain-Da Prato-Debussche trick and perturbing around the ODE-blowup. \\

\textbf{Acknowledgements:} I would like to thank my advisor Terence Tao for his guidance and support. I would also like to thank Benjamin Harrop-Griffiths, Joachim Krieger, Redmond McNamara, Dana Mendelson, and Tadahiro Oh for helpful discussions. 

\section{Notation and Preliminaries}\label{section:prelim}

If \( A,B \geq 0 \), we write \( A \lesssim B \) if there exists an absolute constant \( C >0 \) such that \( A \leq CB \). We also write \( A \sim B \) if \( A \lesssim B \) and \( B \lesssim A \). For any \( d \geq 1 \) and \( x \in \R^d \), we set \( \langle x \rangle := (1+\| x\|_2^2 )^{\frac{1}{2}}\). 

\subsection{Fourier analysis}
For any dimension \( d \geq 1 \) and any Schwartz function \( f \in S(\R^d) \), we define its Fourier transform \( \widehat{f} \colon \R^d \rightarrow \C \) by
\begin{equation}
\mathcal{F}(f)(\xi)=\widehat{f}(\xi):= \frac{1}{(2\pi)^{\frac{d}{2}}} \int_{\R^d} e^{-i\xi x} f(x) \dx. 
\end{equation}
The Fourier inversion formula then implies that 
\begin{equation}
f(x) = \frac{1}{(2\pi)^{\frac{d}{2}}} \int_{\R^d} e^{i\xi x} \widehat{f}(\xi) \dxi. 
\end{equation}
If \( f \)  is spherically symmetric, it follows from the relation between the Fourier and Hankel transforms that 
\begin{equation}
\nu^{\frac{d-2}{2}} \widehat{f}(\nu) := \int_0^\infty r^{\frac{d}{2}} J_{\frac{d-2}{2}}(\nu r) f(r) \dr  \qquad \text{and} \qquad r^{\frac{d-2}{2}} f(r) =\int_0^\infty \nu^{\frac{d}{2}} J_{\frac{d-2}{2}}(\nu r) \widehat{f}(\nu) \dnu. 
\end{equation}
Here, \( r:= \| x\|_2 \), \( \nu:= \| \xi\|_2 \), and \( J_v(\cdot) \) denotes the Bessel function of the first kind. As a special case, we obtain for all radial functions \(f \in S(\R^3) \) in three dimensions that
\begin{equation}\label{pre:eq_bessel_transform}
r f(r) = \sqrt{\frac{2}{\pi}} \int_0^\infty \sin(r\nu) \widehat{f}(\nu) \nu \,\mathrm{d}\nu \qquad \text{and} \qquad \nu \widehat{f}(\nu) = \sqrt{\frac{2}{\pi}}\int_0^\infty \sin(r\nu) f(r) r \, \mathrm{d}r. 
\end{equation}
 
Using the Fourier transform, we define for any \( s \in \R \) the fractional derivative operator \( \langle \nabla \rangle^s \) by 
\begin{equation}
\mathcal{F}(\langle \nabla \rangle^s f)(\xi) := \langle \xi \rangle^s \mathcal{F}(f)(\xi). 
\end{equation}
The fractional Sobolev spaces \( H^s(\R^d) \) are defined by completion of Schwartz space with respect to the norm
\begin{equation*}
\| f \|_{H^s(\R^d)} := \| \langle \nabla \rangle^s f \|_{L^2(\R^d)}. 
\end{equation*}
To simplify our notation, we further set \( \mathcal{H}^s(\R^d):= H^s(\R^d) \times H^{s-1}(\R^d) \). Finally, we define the Littlewood-Paley projections \( \{ Q_N \}_{N\in 2^{\mathbb{N}_0}} \)  as follows: We let \( \chi\colon \R^d \rightarrow [0,1] \) be a smooth cut-off function 
which equals one on \( \| x\|_2 \leq 1/2 \) and zero on \( \| x\|_2 \geq 1 \). We then define
\begin{equation}
\chi_1(\xi):= \chi(\xi) \quad \text{and} \quad \chi_N(\xi):= \chi\Big( \frac{\xi}{N}\Big) - \chi\Big( \frac{2\xi}{N}\Big),~\text{where}~ N\geq 2. 
\end{equation}
For any \( N \geq 1 \), we then define the Littlewood-Paley projection \( Q_N \) by 
\begin{equation}
\mathcal{F}(Q_N f)(\xi):= \chi_N(\xi) \mathcal{F}(f)(\xi). 
\end{equation}
We choose the letter \( Q \) for the Littlewood-Paley projections, instead of the more conventional choices \( P \) or \( S \), since \( P \) already denotes the projection on the unstable mode and \( S \) denotes the one-parameter semigroup. 
\subsection{Probability theory}
We recall the basic properties of sub-gaussian random variables. The organization follows a similar subsection in \cite{Bringmann18c} and we refer the reader to \cite{Vershynin} for a more detailed introduction. 

\begin{definition}[Sub-gaussian random variable]\label{prelim:def_subgaussian}
Let \( (\Omega,\mathcal{F},\mathbb{P}) \) be a probability space and let \( X \colon (\Omega,\mathcal{F}) \rightarrow \R \) be a random variable. We define 
\begin{equation}
\| X\|_{\psi_2} := \sup_{r\geq 1} \frac{\big( \E |X|^r \big)^{\frac{1}{r}}}{\sqrt{r}}.
\end{equation}
We call \( X \) sub-gaussian if \( \| X \|_{\psi_2} < \infty \). We call a family of random variables \( \{ X_j \}_{j\in J} \) uniformly sub-gaussian if \( \sup_{j\in J} \| X_j \|_{\psi_2} < \infty \). 
\end{definition}
The connection with the Gaussian distribution is most easily seen from the following lemma.

\begin{lemma}[{Tail estimate, \cite[Proposition 2.52]{Vershynin}}]\label{prelim:lemma_tail}
Let \(X \) be a sub-gaussian random variable. Then, we have for all \( \lambda \geq 0 \) that
\begin{equation}
P( |X|\geq \lambda) \leq 2 \exp\Big( - c \frac{\lambda^2}{\| X\|_{\psi_2}^2}\Big). 
\end{equation}
\end{lemma}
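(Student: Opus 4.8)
The plan is to derive the tail bound directly from the moment bound that is built into the definition of $\|X\|_{\psi_2}$, via Markov's inequality followed by an optimization over the moment order.

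First I would fix $\lambda>0$. For any $r\geq 1$, applying Markov's inequality to the nonnegative random variable $|X|^r$ gives
\[
P(|X|\geq \lambda)=P\big(|X|^r\geq \lambda^r\big)\leq \lambda^{-r}\,\E|X|^r .
\]
By the definition of the sub-gaussian norm, $\big(\E|X|^r\big)^{1/r}\leq \sqrt{r}\,\|X\|_{\psi_2}$, and hence
\[
P(|X|\geq \lambda)\leq \Big(\frac{\sqrt{r}\,\|X\|_{\psi_2}}{\lambda}\Big)^{r}
\qquad\text{for every }r\geq 1 .
\]

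Next I would choose $r$ to make the right-hand side as small as the constraint $r\geq 1$ allows. Writing the bound as $\exp\!\big(\tfrac{r}{2}\log(r\|X\|_{\psi_2}^2/\lambda^2)\big)$, the unconstrained minimizer is $r_\ast=e^{-1}\lambda^2/\|X\|_{\psi_2}^2$, for which the value of the bound is exactly $\exp\!\big(-\lambda^2/(2e\|X\|_{\psi_2}^2)\big)$. This choice is admissible precisely when $\lambda\geq \sqrt{e}\,\|X\|_{\psi_2}$, and in that range we obtain the claimed inequality with $c=1/(2e)$ (in fact even without the factor $2$).

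Finally, I would dispose of the complementary range $\lambda<\sqrt{e}\,\|X\|_{\psi_2}$ trivially: since $P(|X|\geq\lambda)\leq 1$ always, while $2\exp\!\big(-c\lambda^2/\|X\|_{\psi_2}^2\big)\geq 2\exp(-ce)\geq 1$ whenever $c\leq(\log 2)/e$, the inequality holds automatically there. As $1/(2e)\leq(\log 2)/e$, the single constant $c:=1/(2e)$ works in both regimes, which completes the argument. There is no genuine obstacle here; the only mild subtlety is the interaction between the constraint $r\geq 1$ in the optimization and small values of $\lambda$, and it is precisely this that forces the harmless factor $2$ in the statement. Alternatively, one could run the same Markov argument on $\E\exp(X^2/K^2)$ using an equivalent characterization of the $\psi_2$-norm, but the moment route above is the most direct given the definition adopted in Definition \ref{prelim:def_subgaussian}.
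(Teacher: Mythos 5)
Your argument is correct: Markov's inequality applied to $|X|^r$, combined with the moment bound $(\E|X|^r)^{1/r}\leq\sqrt{r}\,\|X\|_{\psi_2}$ from Definition \ref{prelim:def_subgaussian}, and optimization at $r_\ast=\lambda^2/(e\|X\|_{\psi_2}^2)$ gives the bound with $c=1/(2e)$ for $\lambda\geq\sqrt{e}\,\|X\|_{\psi_2}$, while the factor $2$ indeed absorbs the small-$\lambda$ regime since $2e^{-ce}\geq 1$ for this $c$. The paper does not prove the lemma at all --- it simply cites \cite[Proposition 2.52]{Vershynin} --- and your derivation is essentially the standard moment-to-tail implication given there, so there is nothing to flag beyond the harmless convention that the case $\|X\|_{\psi_2}=0$ (or $=\infty$) is trivial or vacuous.
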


We also recall Khintchine's inequality, which is a concentration-inequality for sums of independent uniformly sub-gaussian random variables.

\begin{lemma}[{Khintchine's inequality, cf. \cite[Proposition 2.52]{VershyninRMT}}]
Let \( (X_j)_{j=1,\hdots,J} \) be a finite sequence of independent sub-gaussian random variables with zero mean and let \( (a_j)_{j=1,\hdots,J} \) be a finite sequence of real or complex numbers. Then, we have for all \( r \geq 1 \) that 
\begin{equation}
\Big( \E |\sum_{j=1}^J a_j X_j |^r \Big)^{\frac{1}{r}} \lesssim \sqrt{r} \big( \max_{1\leq  j \leq J} \| X_j \|_{\psi_2} \big) \Big( \sum_{j=1}^J |a_j|^2 \Big)^{\frac{1}{2}}. 
\end{equation}
In other words, it holds that 
\begin{equation*}
\Big\| \sum_{j=1}^J a_j X_j \Big\|_{\psi_2} \lesssim \big( \max_{1\leq  j \leq J} \| X_j \|_{\psi_2} \big) \| a_j \|_{\ell^2_j}. 
\end{equation*}
\end{lemma}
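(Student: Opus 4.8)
The plan is to reduce the vector-valued statement to a scalar moment bound via a standard symmetrization-free argument based on the moment generating function, or more directly, to bound the $\psi_2$-norm of the sum by the $\psi_2$-norms of the summands. First I would recall that for a mean-zero random variable $X$, being sub-gaussian in the sense of Definition \ref{prelim:def_subgaussian} is equivalent (up to absolute constants) to the bound $\E \exp(\lambda X) \leq \exp(C \lambda^2 \| X\|_{\psi_2}^2)$ for all $\lambda \in \R$; this is a routine consequence of expanding the exponential in a Taylor series, controlling $\E |X|^r \leq (\sqrt{r}\, \| X\|_{\psi_2})^r$ term by term, and using $r! \geq (r/e)^r$ together with the mean-zero hypothesis to kill the linear term. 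I would state this as an auxiliary claim and only sketch its short proof, since it is textbook material (cf. \cite{Vershynin}).

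Next, by homogeneity I may assume $\max_j \| X_j\|_{\psi_2} \leq 1$ and $\sum_j |a_j|^2 = 1$, and I may also assume the $a_j$ are real (treating real and imaginary parts separately costs only an absolute constant). Using independence,
\begin{equation*}
\E \exp\Big( \lambda \sum_{j=1}^J a_j X_j \Big) = \prod_{j=1}^J \E \exp\big( \lambda a_j X_j \big) \leq \prod_{j=1}^J \exp\big( C \lambda^2 a_j^2 \big) = \exp\big( C \lambda^2 \big),
\end{equation*}
so the sum $S := \sum_j a_j X_j$ itself satisfies the exponential-moment bound with constant $C$, uniformly in $J$. Applying the same bound to $-S$ (which has the same structure) and then optimizing over $\lambda$ in the Markov/Chernoff inequality $P(|S| \geq t) \leq e^{-\lambda t}\,\E e^{\lambda |S|} \leq 2 e^{-\lambda t + C\lambda^2}$ yields a sub-gaussian tail $P(|S| \geq t) \lesssim \exp(-c t^2)$.

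Finally I would convert the tail bound back into a moment bound: integrating $\E |S|^r = \int_0^\infty r t^{r-1} P(|S| \geq t)\, dt$ against the Gaussian tail gives $(\E |S|^r)^{1/r} \lesssim \sqrt{r}$ for all $r \geq 1$, which is exactly $\| S\|_{\psi_2} \lesssim 1$; undoing the normalization reinstates the factors $\max_j \| X_j\|_{\psi_2}$ and $\| a_j\|_{\ell^2_j}$, and the first displayed estimate of the lemma follows from the definition of $\| \cdot \|_{\psi_2}$. The only mild subtlety — the "main obstacle," such as it is — is being careful that all constants are absolute and independent of $J$, so that the bound passes to infinite sequences by monotone convergence; this is automatic from the product computation above since each factor contributes $\exp(C\lambda^2 a_j^2)$ and $\sum_j a_j^2$ is finite. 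Everything else is bookkeeping with the equivalence between tail, moment, and MGF formulations of sub-gaussianity.
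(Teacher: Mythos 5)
Your argument is correct and is essentially the proof the paper implicitly invokes by citing Vershynin: the paper states this lemma without proof, and the standard route there is exactly your MGF-of-sub-gaussians, independence product, Chernoff tail, tail-to-moment chain. The only point worth flagging is that the Taylor-series derivation of $\E \exp(\lambda X) \leq \exp(C\lambda^2 \|X\|_{\psi_2}^2)$ needs a separate (but equally routine) treatment in the regime $\lambda \|X\|_{\psi_2} \gtrsim 1$, e.g.\ via $\lambda X \leq c X^2 + \lambda^2/(4c)$; with that noted, everything goes through with absolute constants as you claim.
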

Whereas Khintchine's inequality controls the sub-gaussian norm of a random series, we also record the following estimate for the maximum of sub-gaussian random variables.

\begin{lemma}[Maximum of sub-gaussian random variables]\label{prelim:lemma_maximum}
Let \( (X_j)_{j=1,\hdots,J} \) be a finite sequence of (not necessarily independent) sub-gaussian random variables. Then, it holds that 
\begin{equation}
\| \max_{1\leq  j \leq J} |X_j| \|_{\psi_2} \lesssim \sqrt{\log(2+J)} \max_{1\leq  j \leq J} \| X_j \|_{\psi_2}. 
\end{equation}
\end{lemma}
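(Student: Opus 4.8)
The plan is to estimate the moments of $\max_{1\le j\le J}|X_j|$ directly and then optimize the exponent. Set $M := \max_{1\le j\le J}\|X_j\|_{\psi_2}$. For any fixed $r\ge 1$, bounding the maximum by the sum, $\max_{1\le j\le J}|X_j|^r \le \sum_{j=1}^J |X_j|^r$, and invoking the definition of the $\psi_2$-norm of each $X_j$ gives
\[
\big(\E\max_{1\le j\le J}|X_j|^r\big)^{1/r} \le \Big(\sum_{j=1}^J \E|X_j|^r\Big)^{1/r} \le J^{1/r}\sqrt r\, M.
\]
The nuisance factor is $J^{1/r}$, which is $\lesssim 1$ once $r$ is of size $\log J$ or larger, so the natural move is to split the supremum defining $\|\max_j|X_j|\|_{\psi_2}$ according to whether $r$ is large or small relative to $\log(2+J)$.

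First I would handle the regime $r \ge \log(2+J)$: here $J^{1/r} \le (2+J)^{1/\log(2+J)} = e$, so the displayed bound divided by $\sqrt r$ is at most $e\, M$, which is $\lesssim \sqrt{\log(2+J)}\, M$ since $\log(2+J)$ is bounded below by an absolute constant for $J\ge 1$. For the remaining range $1 \le r < \log(2+J)$, I would exploit the monotonicity of $L^r$-norms on a probability space (power-mean inequality): comparing with the exponent $r' := \log(2+J) \ge r$ gives
\[
\big(\E\max_{1\le j\le J}|X_j|^r\big)^{1/r} \le \big(\E\max_{1\le j\le J}|X_j|^{r'}\big)^{1/r'} \le e\sqrt{\log(2+J)}\, M,
\]
using the estimate from the first regime for the exponent $r'$, and then dividing by $\sqrt r \ge 1$ only improves the bound. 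Taking the supremum over all $r\ge 1$ across the two regimes and recalling the definition of $\|\cdot\|_{\psi_2}$ yields the claim.

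I do not anticipate a genuine obstacle; the only delicate point is the small-$r$ regime, where the crude union bound $J^{1/r}$ degrades and one must instead pass through the larger exponent $r'$ (or, equivalently, run a union bound over $j$ in the tail estimate of Lemma \ref{prelim:lemma_tail} and integrate the resulting tail bound for $\max_j|X_j|$). Either route is short and self-contained.
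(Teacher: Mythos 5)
Your proof is correct, and it is essentially the paper's argument: both are moment-method proofs resting on the observation that a union bound over \( j \) costs only an absolute constant once one works at moment order \( \log(2+J) \). The only difference is in the bookkeeping: the paper avoids your case split on \( r \) by bounding \( \max_j |X_j|^r \) by \( \big( \sum_j |X_j|^{pr} \big)^{1/p} \) with the fixed auxiliary exponent \( p = \log(2+J) \) and then applying Jensen's inequality, whereas you bound the maximum by the plain sum and recover the range \( 1 \le r < \log(2+J) \) through the monotonicity of \( L^r \)-norms on a probability space — both devices implement the same jump to moment order \( \sim \log(2+J) \).
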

\begin{proof}
Let \( r \geq 1 \) be arbitrary and let \( p \geq 1 \) remain to be chosen. From the embedding \( \ell^p \hookrightarrow \ell^\infty \) and Hölder's inequality, we obtain that
\begin{align*}
& \Big(\E \max_{1\leq  j \leq J} |X_j|^r \Big)^{\frac{1}{r}} = \Big( \E \big( \sum_{j=1}^J |X_j|^{pr}\big)^{\frac{1}{p}}\Big)^{\frac{1}{r}} \lesssim \Big( \E \sum_{j=1}^J |X_j|^{pr} \Big)^{\frac{1}{pr}} \leq \Big( \sum_{j=1}^J (\sqrt{pr} \| X_j \|_{\psi_2})^{pr} \Big)^{\frac{1}{pr}} \\
&\lesssim \sqrt{pr} J^{\frac{1}{pr}} \max_{1\leq  j \leq J} \| X_j \|_{\psi_2} \lesssim \sqrt{r} \sqrt{p} J^{\frac{1}{p}} \max_{1\leq j \leq J} \|X_j \|_{\psi_2} . 
\end{align*}
The desired estimate then follows by choosing \( p:= \log(2+J) \). 
\end{proof}

We now record the following large-deviation estimate for the radial randomization \( f^\omega \) in Sobolev spaces. Similar estimates for the Wiener randomization are well-known in the literature on dispersive equations with random initial data. 
\begin{lemma}[The $H^s$-norm of the radial randomization]\label{prelim:lemma_hs_norm}
Let \( s \in \R\), let \( f \in H^s_{\rad}(\R^3) \), and let \( f^\omega \) be as in Definiton \ref{intro:definition_randomization}. Then, it holds for all \( r \geq 1 \) that 
\begin{equation}\label{prelim:eq_hs_norm}
\| f^\omega \|_{L_\omega^r H^s_{\rad}(\Omega \times \R^3)} \lesssim \sqrt{r} \| f \|_{H^s_{\rad}(\R^3)}. 
\end{equation}
Furthermore, let \( s^\prime > s \) and assume that \( f \not \in H^{s^\prime}_{\rad}(\R^3)\). Then, it holds that 
\begin{equation}\label{prelim:eq_hs_no_gain}
\| f^\omega \|_{H^{s^\prime}_{\rad}(\R^3)} = \infty \quad \text{a.s.}
\end{equation}
\end{lemma}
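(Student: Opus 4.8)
The plan is to reduce the $L^r_\omega H^s$ estimate \eqref{prelim:eq_hs_norm} to Khintchine's inequality applied frequency-annulus by frequency-annulus, using the orthogonality of the Littlewood–Paley-type pieces coming from the unit-width annular decomposition in Definition \ref{intro:definition_randomization}. Write $f_n$ for the function with Fourier transform $1_{[n,n+1)}(\|\xi\|_2)\widehat f(\xi)$, so that $\widehat{f^\omega}(\xi)=\sum_{n=0}^\infty X_n(\omega)\widehat{f_n}(\xi)$. Since the annuli $\{n\le\|\xi\|_2<n+1\}$ are disjoint, for fixed $\omega$ Plancherel gives
\begin{equation*}
\|f^\omega\|_{H^s}^2=\int_{\R^3}\langle\xi\rangle^{2s}\Big|\sum_{n}X_n(\omega)\widehat{f_n}(\xi)\Big|^2\dxi=\sum_n X_n(\omega)^2\int_{\R^3}\langle\xi\rangle^{2s}|\widehat{f_n}(\xi)|^2\dxi=\sum_n X_n(\omega)^2\|f_n\|_{H^s}^2.
\end{equation*}
Now take the $L^r_\omega$ norm. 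For $r\ge 2$, Minkowski's inequality in $L^{r/2}_\omega$ applied to the sum over $n$ gives $\|f^\omega\|_{L^r_\omega H^s}^2\le\sum_n\|X_n^2\|_{L^{r/2}_\omega}\|f_n\|_{H^s}^2=\sum_n\|X_n\|_{L^r_\omega}^2\|f_n\|_{H^s}^2$. Since the $X_n$ are (uniformly) sub-gaussian, $\|X_n\|_{L^r_\omega}\lesssim\sqrt r\,\|X_n\|_{\psi_2}\lesssim\sqrt r$ by Definition \ref{prelim:def_subgaussian}, so $\|f^\omega\|_{L^r_\omega H^s}^2\lesssim r\sum_n\|f_n\|_{H^s}^2=r\|f\|_{H^s}^2$, which is \eqref{prelim:eq_hs_norm}; the case $1\le r<2$ follows from the case $r=2$ by Hölder in $\omega$. (Alternatively, one can phrase the whole computation through Khintchine's inequality as stated in the excerpt, bounding $\|\sum_n X_n\langle\xi\rangle^s\widehat{f_n}(\xi)\|_{\psi_2}$ pointwise in $\xi$ and then integrating; both routes are routine.)

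For the divergence statement \eqref{prelim:eq_hs_no_gain}, the idea is a zero–one argument. Run the same orthogonality computation with $s'$ in place of $s$: for every $\omega$,
\begin{equation*}
\|f^\omega\|_{H^{s'}}^2=\sum_{n=0}^\infty X_n(\omega)^2\,\|f_n\|_{H^{s'}}^2.
\end{equation*}
The hypothesis $f\notin H^{s'}_{\rad}$ means exactly $\sum_n\|f_n\|_{H^{s'}}^2=+\infty$. Since the $X_n$ are independent with $\mathbb P(X_n^2\ge t_0)=:p_0>0$ for some fixed $t_0>0$ (true for standard Gaussians, and for any non-degenerate uniformly sub-gaussian sequence one has a uniform lower bound of this type on a fixed positive level, which should be added as a standing assumption or noted), infinitely many of the events $\{X_n^2\ge t_0\}$ occur almost surely by Borel–Cantelli (second, independent form). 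On that almost sure event the partial sums $\sum_{n\le N}X_n^2\|f_n\|_{H^{s'}}^2$ are bounded below by $t_0\sum_{n\in A,\,n\le N}\|f_n\|_{H^{s'}}^2$ where $A$ is the (infinite) set of indices with $X_n^2\ge t_0$; one then has to know this latter sum diverges. That is not automatic from divergence of the full sum, so the cleanest fix is a Kolmogorov three-series / zero–one law argument: the random variable $\|f^\omega\|_{H^{s'}}^2=\sum_n X_n^2\|f_n\|_{H^{s'}}^2$ is a sum of independent nonnegative terms, hence by Kolmogorov's three-series theorem it converges a.s. if and only if $\sum_n\mathbb E[X_n^2\wedge 1]\|f_n\|_{H^{s'}}^2<\infty$ (after truncation); since $\mathbb E[X_n^2\wedge 1]$ is bounded below by a positive constant uniformly in $n$ and $\sum_n\|f_n\|_{H^{s'}}^2=\infty$, the series diverges a.s., giving \eqref{prelim:eq_hs_no_gain}.

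The main obstacle is the second part, specifically making the divergence argument clean under the weak hypothesis of \emph{uniform sub-gaussianity} rather than Gaussianity: one needs a uniform lower bound of the form $\inf_n\mathbb E[\min(X_n^2,1)]>0$ (equivalently, the $X_n$ do not degenerate to $0$), which is implicitly part of the setup and should be recorded; with that in hand, Kolmogorov's three-series theorem applied to the independent nonnegative series $\sum_n X_n^2\|f_n\|_{H^{s'}}^2$ settles it. The first part, by contrast, is a soft orthogonality-plus-Khintchine computation with no real difficulty.
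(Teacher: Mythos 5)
Your proof is correct. For the moment bound \eqref{prelim:eq_hs_norm} you follow essentially the paper's route: the paper reduces to $s=0$, interchanges $L^r_\omega$ and $L^2_x$ by Minkowski and applies Khintchine pointwise in $x$, while you use the disjointness of the annuli to get the exact pointwise identity $\|f^\omega\|_{H^s}^2=\sum_n X_n(\omega)^2\|f_n\|_{H^s}^2$ and then bound the $L^{r/2}_\omega$-norms of the $X_n^2$; these are interchangeable routine computations. For the divergence statement \eqref{prelim:eq_hs_no_gain} the two arguments genuinely differ: the paper's ``simple calculation'' is a Laplace-transform trick, namely by independence and the same orthogonality identity $\E\exp\big(-\|f^\omega\|_{H^{s'}}^2\big)=\prod_n\E\exp\big(-X_n^2\|f_n\|_{H^{s'}}^2\big)=\prod_n\big(1+2\|f_n\|_{H^{s'}}^2\big)^{-1/2}=0$ whenever $\sum_n\|f_n\|_{H^{s'}}^2=\infty$, and since the integrand is nonnegative this forces $\|f^\omega\|_{H^{s'}}=\infty$ almost surely. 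Your route via Kolmogorov's three-series theorem together with the zero--one law reaches the same conclusion, and you correctly diagnose why the naive second Borel--Cantelli attempt is insufficient. Two small remarks: the truncated criterion should involve $\sum_n\E\big[\min\big(X_n^2\|f_n\|_{H^{s'}}^2,1\big)\big]$ rather than $\sum_n\E\big[X_n^2\wedge 1\big]\|f_n\|_{H^{s'}}^2$, but the divergence you need still follows from your non-degeneracy bound, since $\E\big[\min(X_n^2 a,1)\big]\geq p_0\min(t_0,1)\min(a,1)$ and $\sum_n\min(a_n,1)=\infty$ whenever $\sum_n a_n=\infty$; and for the lemma as stated the $X_n$ are standard Gaussians, so no extra hypothesis is needed --- your caveat is only relevant for the sub-gaussian generalization of Remark \ref{intro:remark_randomization}, where indeed some non-degeneracy must be assumed (the statement fails, e.g., for $X_n\equiv 0$). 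The paper's exponential-moment computation is shorter; your argument makes the probabilistic mechanism, and the exact non-degeneracy required beyond the Gaussian case, explicit.
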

This lemma shows that the radial randomization does not change the regularity of \( f \) on the scale of \( L^2\)-based Sobolev spaces. 

\begin{proof} Since the radial randomization commutes with the Fourier multiplier \( \langle \nabla \rangle^{s} \), we may assume that \( s=0\). Using Minkowski integral and Khintchine's inequality, we obtain that for all \( r\geq 2 \) that
\begin{align*}
&\| f^\omega \|_{L_\omega^r L_x^2(\Omega \times \R^3)} \leq \| f^\omega \|_{L_x^2 L_\omega^r(\R^3 \times \Omega)} \lesssim \sqrt{r} \| 1_{[n,n+1]}(|\nabla|) f \|_{L_x^2 \ell_n^2(\R^3 \times \mathbb{N})} \\
&= \sqrt{r} \| 1_{[n,n+1]}(|\nabla|) f \|_{\ell_n^2L_x^2 (\mathbb{N} \times \R^3)} = \sqrt{r} \| f \|_{L^2_x(\R^3)}. 
\end{align*}
This yields \eqref{prelim:eq_hs_norm}. A simple calculation shows that 
\begin{equation}
\E \exp( - \| f^\omega \|_{H^{s^\prime}_\rad(\R^3)}^2) = 0, 
\end{equation}
which yields \eqref{prelim:eq_hs_no_gain}. 
\end{proof}

\section{Bourgain-Da Prato-Debussche trick, similarity coordinates,  and first-order systems} \label{section:reformulation}

In this section, we perform several standard reformulations of the Cauchy problem \eqref{nlw}. They consist of a combination of  the Bourgain-Da Prato-Debussche trick \cite{Bourgain96,DD02} with the first-order systems from \cite{Donninger17,DS12}.

\subsection{Bourgain-Da Prato-Debussche trick}

We cannot directly use a contraction argument to solve the nonlinear wave equation \eqref{rnlw}, since the random initial data \( (f_1^\omega,f_2^\omega) \) only lives in scaling-supercritical Sobolev spaces. To overcome this difficulty, we first extract the linear evolution of the random initial data, which is known as Bourgain's trick \cite{Bourgain96} in the dispersive PDE literature and the Da Prato-Debussche trick \cite{DD02} in the SPDE literature\footnote{Strictly speaking, Bourgain \cite{Bourgain96} worked with random initial data and Da Prato-Debussche \cite{DD02} worked with a stochastic forcing term. Thus, our setting may be a bit closer to Bourgain's work \cite{Bourgain96}, but we still chose the terminology Bourgain-Da Prato-Debussche trick.}. To this end, we let
\begin{equation}\label{re:eq_fomega}
f^\omega(t,x):= \cos(t|\nabla|) f_1^\omega(x) + \frac{\sin(t|\nabla|)}{|\nabla|} f_2^\omega (x)
\end{equation}
be the solution of the linear wave equation with random initial data. We then decompose \( u= f^\omega+v\) and obtain the nonlinear wave equation 
\begin{equation}\label{re:eq_v}
\begin{cases} 
- \partial_t^2 v + \Delta v = - (v+f^\omega)^5 \qquad  (t,x) \in \R \times \R^3, \\
v(0)=u^{(1)}(0) , \quad \partial_t v (0)= \partial_t u^{(1)}(0),
\end{cases}
\end{equation}
for the nonlinear component \( v \). As can be seen by comparing \eqref{rnlw} and \eqref{re:eq_v}, we have replaced the rough initial data by a rough forcing term. In our setting, this is a favorable trade-off. The rough forcing term can eventually  be controlled through the smoothing effect of the Duhamel integral and probabilistic Strichartz estimates. From Proposition \ref{prob:prop_main} below, we see that 
\begin{equation}
\| (T-t)^{-\frac{3}{4}} f^\omega(t,x) \|_{L_t^2 L_x^4(\CT)} + \| f^\omega(t,x) \|_{L_t^5 L_x^{10}(\CT)} \lesssim \delta
\end{equation}
with high probability and hence our main theorem reduces to an estimate of \( v-u^{(T)} \).

\subsection{Similarity coordinates and first-order systems}\label{section:similarity}

Since the initial data and \( f^\omega \) in \eqref{re:eq_v} are spherically symmetric, we can rewrite the nonlinear wave equation as
\begin{equation}\label{re:eq_v_radial}
- \partial_t^2 v(t,r) + \partial_r^2 v(t,r) + \frac{2}{r} \partial_r v(t,r)= - (v(t,r)+f^\omega(t,r))^5. 
\end{equation}
We now switch from Cartesian to similarity coordinates. For a fixed  \( T \in [1/2,3/2] \) and \( (t,x) \in \CT \), we define the similarity coordinates \( (\tau,y) \in [0,\infty) \times \B^3 \) by
\begin{equation}\label{re:eq_similarity}
\tau := - \log(T-t) + \log(T) \qquad \text{and} \qquad y:= \frac{x}{T-t}
\end{equation}
We also write \( \rho := |y| = r/(T-t) \). In similarity coordinates, we write the solution of \eqref{re:eq_v} as 
\begin{equation}\label{re:eq_psi}
\psi(\tau,\rho;T):= (T-t)^{\frac{1}{2}} v(t,r). 
\end{equation}
To simplify the notation, we will often omit the dependence of \( \psi(\tau,\rho;T) \) on \( T \) and simply write \( \psi(\tau,\rho)= \psi(\tau,\rho;T) \). The ODE-blowup  \( u^{(T)}\colon \CT \rightarrow \R \), which is only a solution of \eqref{re:eq_v_radial} if \( f^\omega\equiv 0 \),  is given in similarity coordinates by 
\begin{equation}
\psi^{(T)}(\tau,\rho;T) = \kappa . 
\end{equation}
As in \cite{Donninger17}, we obtain a first-order system by introducing
\begin{equation}
\begin{aligned}
\psi_1 (\tau,\rho) &:= \psi(\tau,\rho), \\
\psi_2 (\tau,\rho) &:= \big(\partial_\tau + \rho \partial_\rho + \frac{1}{2}\big) \psi(\tau,\rho). 
\end{aligned} 
\end{equation}
The purpose of the lower-order term \( 1/2 \cdot \psi(\tau,\rho) \) in the definition of \( \psi_2(\tau,\rho) \) is to cancel the effect of the factor \( (T-t)^{\frac{1}{2}} \) in \eqref{re:eq_psi} on the initial data. In the unknowns \( (\psi_1,\psi_2) \), the nonlinear wave equation \eqref{nlw} takes the form 
\begin{equation}
\begin{cases}
\partial_\tau \psi_1 &= - \rho \partial_\rho \psi_1 - \frac{1}{2} \psi_1 + \psi_2 \\
\partial_\tau \psi_2 &= \partial_\rho^2 \psi_1 + \frac{2}{\rho} \partial_\rho \psi_1 - \rho \partial_\rho \psi_2 - \frac{3}{2} \psi_2 + (\psi_1+f^{T,\omega})^5 \\
\psi_1(0,\rho)  &= T^{\frac{1}{2}} u^{(1)}(0,T\rho), \quad \psi_2(0,\rho)= T^{\frac{3}{2}} \partial_t u^{(1)}(0,T\rho). 
\end{cases}
\end{equation}
With a slight abuse of notation, we wrote 
\begin{equation}\label{re:eq_f_similarity}
f^{T,\omega}(\tau,\rho):= (T-t)^{\frac{1}{2}} f^\omega(t,x). 
\end{equation}
The superscript \( T \) indicates both the change into similarity coordinates and the blowup time in \eqref{re:eq_similarity}. From this derivation, it is clear that the solution \( u \) of \eqref{rnlw} is given in similarity coordinates by \( f^{T,\omega}+\psi_1 \). 
We write \( \psib = (\psi_1,\psi_2) \) for the vector containing both components. From the definition of \((\psi_1,\psi_2) \), it follows that the ODE-blowup \( u^{(T)} \) corresponds to \( \psib^{(T)}=(\kappa,\kappa/2) \). 
Finally, we decompose the evolution into the ODE-blowup and a lower-order term. To this end, we set 
\begin{equation}\label{re:eq_phi}
\phib(\tau,\rho;T) := \psib(\tau,\rho;T) -  \psib^{(T)}(\tau,\rho;T) = ( \psi_1(\tau,\rho;T) - \kappa , \psi_2(\tau,\rho;T) - \kappa/2 ) . 
\end{equation}
We denote the individual components of \( \phib \) by \( (\phi_1,\phi_2) \). Then, the nonlinear wave equation \eqref{nlw} in terms of \( (\phi_1,\phi_2) \) is given by 
\begin{equation}\label{re:eq_phi_nlw}
\begin{cases}
\partial_\tau \phi_1 &= - \rho \partial_\rho \phi_1 - \frac{1}{2} \phi_1 + \phi_2 \\
\partial_\tau \phi_2 &= \partial_\rho^2 \phi_1 + \frac{2}{\rho} \partial_\rho \phi_1 - \rho \partial_\rho \phi_2 - \frac{3}{2} \phi_2 + (\kappa+f^{T,\omega}+\phi_1)^5 - \kappa^5, \\
\phi_1(0,\rho)  &= T^{\frac{1}{2}} u^{(1)}(0,T\rho)- \kappa, \quad \phi_2(0,\rho)= T^{\frac{3}{2}} \partial_t u^{(1)}(0,T\rho) - \frac{\kappa}{2}. 
\end{cases}
\end{equation}
We expand the quintic power and decompose
\begin{equation*}
(\kappa+ f^{T,\omega}+\phi_1)^5 -\kappa^5 = 5\kappa^4 \phi_1 + N(\phi_1,f^{T,\omega}), 
\end{equation*}
where 
\begin{equation}\label{re:eq_N}
\begin{aligned}
N(\phi_1,f^{T,\omega})&=  5 [(\kappa+f^{T,\omega})^4-\kappa^4] \,  \phi_1 + 10 (\kappa+f^{T,\omega})^3 \phi_1^2 + 10 (\kappa+f^{T,\omega})^2 \phi_1^3  \\
&+ 5  (\kappa+f^{T,\omega}) \phi_1^4 + \phi^5_1 + (\kappa+f^{T,\omega})^5 -\kappa^5. 
\end{aligned}
\end{equation}
In particular, elementary estimates lead to 
\begin{equation}\label{re:eq_estimate_N}
|N(\phi_1,f^{T,\omega})| \lesssim |f^{T,\omega}|+|f^{T,\omega}|^5 + |\phi_1|^2 +|\phi_1|^5. 
\end{equation}
We point out that \eqref{re:eq_N} contains terms which are linear in \( \phi_1 \), such as \( 20 \kappa^3 f^{T,\omega} \phi_1 \). Due to the decay of \( f^{T,\omega} \) in similarity coordinates from Proposition \ref{prob:prop_main}, they can still be treated perturbatively. In contrast, the linear term \( 5 \kappa^4 \phi_1 \), which is a result of the interaction between \( \phi_1 \) and the ODE-blowup, has to be included in the linear evolution of \( \phi_1 \). We further emphasize that \( N(\phi_1,f^{T,\omega}) \) contains the potentially dangerous term  \( 5\kappa^4 f^{T,\omega}\), but postpone a more detailed discussion of this until Section \ref{section:comparison}. Using this notation and the value of \( \kappa \), we can rewrite \eqref{re:eq_phi_nlw} as 
\begin{equation}\label{re:eq_phi_N_nlw}
\begin{cases}
\partial_\tau \phi_1 &= - \rho \partial_\rho \phi_1 - \frac{1}{2} \phi_1 + \phi_2 \\
\partial_\tau \phi_2 &= \partial_\rho^2 \phi_1 + \frac{2}{\rho} \partial_\rho \phi_1 - \rho \partial_\rho \phi_2 - \frac{3}{2} \phi_2 + \frac{15}{4} \phi_1 + N(\phi_1,f^{T,\omega}), \\
\phi_1(0,\rho)  &= T^{\frac{1}{2}} \partial_t u^{(1)}(0,T\rho)- \kappa, \quad \phi_2(0,\rho)= T^{\frac{3}{2}} \partial_t u^{(1)}(0,T\rho) - \frac{\kappa}{2}. 
\end{cases}
\end{equation}

\subsection{The linearized problem}\label{section:linearized}

Since the contribution of \( N(\phi_1,f^{T,\omega}) \) will be treated perturbatively, we are also interested in the linear evolution without this term. We consider 
\begin{equation}\label{re:eq_phi_linear}
\begin{cases}
\partial_\tau \phi_1 &= - \rho \partial_\rho \phi_1 - \frac{1}{2} \phi_1 + \phi_2 \\
\partial_\tau \phi_2 &= \partial_\rho^2 \phi_1 + \frac{2}{\rho} \partial_\rho \phi_1 - \rho \partial_\rho \phi_2 - \frac{3}{2} \phi_2 + \frac{15}{4} \phi_1\\
\phib(0)&= \phib_0,
\end{cases}
\end{equation}
which corresponds to a linear wave equation with a potential. 
We now recall some notation and basic properties regarding \eqref{re:eq_phi_linear} from \cite{Donninger17,DS12}. We define the differential operator
\begin{equation}
\Lzt \phib(\rho) := \begin{pmatrix} - \rho \partial_\rho \phi_1 - \frac{1}{2} \phi_1 + \phi_2 \\ \partial_\rho^2 \phi_1 + \frac{2}{\rho} \partial_\rho \phi_1 - \rho \partial_\rho \phi_2 - \frac{3}{2} \phi_2 \end{pmatrix}
\end{equation}
with domain 
\begin{equation}
\D (\Lzt) :=C^2([0,1])\times C^1([0,1]).
\end{equation}
We note that despite the singularity, \( \rho^{-1} \partial_\rho \phi_1 \) is still contained in \( L^2(\B^3) \) for all \( \phib \in \D (\Lzt) \). Since \( \Lzt \) does not contain the potential term \( 15/4 \cdot \phi_1 \), it corresponds to the free wave equation in similarity coordinates. For any \( \phib \in C^1([0,1])\times C^0([0,1]) \), we define 
\begin{equation}\label{re:eq_hone_norm}
\| \phib \|_{\Hone}^2 := \int_0^1 ((\rho \phi_1)^\prime)^2 + (\rho \phi_2)^2 \drho.
\end{equation}
The Hilbert space \( \Hone \) is then defined through completion. A simple calculation shows that 
\begin{equation}\label{re:eq_hone}
\| \phib \|_{\Hone}^2 = \int_0^1 ((\phi_1^\prime)^2+\phi_2^2) \rho^2 \drho + \phi_1^2(0),
\end{equation}
and hence \( \Hone \simeq H^1_\rad(\B^3) \times L^2_\rad(\B^3) \). From \cite[Proposition 2.1]{Donninger17}, it follows that \( \Lzt \) has a closed extension \( \Lz \), which generates a strongly-continuous and uniformly bounded one-parameter semi-group \( \{ S_0(\tau)\colon \tau \geq  0 \}\). We define the compact linear operator \( \Lp \colon \Hone \rightarrow \Hone \) by 
\begin{equation}
\Lp \phib(\rho) := \begin{pmatrix} 0 \\ \frac{15}{4} \phi_1(\rho) \end{pmatrix}.
\end{equation}
Finally, we set \( L:= \Lz + \Lp \), which is (an extension of) the formal differential operator in \eqref{re:eq_phi_linear}. Using the bounded perturbation theorem, it follows that \( L \) generates a strongly continuous one-parameter semigroup \( S(\tau) \). We also recall the following properties of \( L \) and the associated one-parameter semigroup \( S(\tau) \). 

\begin{lemma}[{\cite[Proposition 2.3 and Lemma 5.7]{Donninger17}}]\label{re:lem_semigroup}
We have that \( \sigma(L) \subseteq \{ z \in \mathbb{C} \colon \operatorname{Re} z \leq 0 \} \medcup \{ 1 \} \) and \( 1 \in \sigma_p(L) \). The geometric eigenspace of the eigenvalue \( 1 \) is one-dimensional and 
spanned by 
\begin{equation}
\g (\rho) := \begin{pmatrix} 2 \\ 3 \end{pmatrix}. 
\end{equation}
Furthermore, there exists a bounded projection \( P \colon \Hone \rightarrow \langle g \rangle \) such that \( [P,S(\tau)] \)=0 for all \( \tau\geq 0 \). As a consequence, we have that \( S(\tau) P = e^\tau P \) for all \( \tau \geq 0 \). Finally, we have that 
\begin{equation}
\| S(\tau) (I-P) \|_{\Hone \rightarrow \Hone} \lesssim 1 
\end{equation}
uniformly for all \( \tau \geq 0\). 
\end{lemma}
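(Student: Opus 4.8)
The plan is to combine abstract semigroup perturbation theory with an explicit ODE mode analysis, following \cite{Donninger17}. Since \( \Lp \) has finite rank, it is bounded, so the bounded perturbation theorem already gives that \( L = \Lz + \Lp \) generates a strongly continuous semigroup \( S(\tau) \) with \( \| S(\tau) \|_{\Hone \to \Hone} \leq M e^{\| \Lp \| \tau} \), where \( M := \sup_{\tau \geq 0} \| S_0(\tau) \| \). The uniform boundedness of \( S_0 \) yields \( \sigma(\Lz) \subseteq \{ \operatorname{Re} z \leq 0 \} \) together with the Hille--Yosida bound \( \| (z - \Lz)^{-1} \| \leq M / \operatorname{Re} z \) for \( \operatorname{Re} z > 0 \). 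For \( z \notin \sigma(\Lz) \) we factor \( z - L = (z - \Lz)\big( I - (z-\Lz)^{-1} \Lp \big) \), where \( (z-\Lz)^{-1} \Lp \) is compact, depends analytically on \( z \), and tends to \( 0 \) in norm as \( z \to +\infty \) along the reals. By the analytic Fredholm theorem, \( I - (z-\Lz)^{-1} \Lp \) is boundedly invertible on \( \{ \operatorname{Re} z > 0 \} \) outside a discrete set, every point of which is an eigenvalue of \( L \) of finite algebraic multiplicity.

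To identify these eigenvalues, reduce the eigenvalue equation \( L h = \lambda h \): eliminating \( h_2 \) from \( h = (h_1,h_2) \) by means of the first equation, a direct computation gives
\begin{equation*}
(1-\rho^2) h_1^{\prime\prime} + \Big( \frac{2}{\rho} - (2\lambda+3)\rho \Big) h_1^{\prime} - (\lambda+3)(\lambda-1) h_1 = 0,
\end{equation*}
a hypergeometric-type equation with regular singular points at \( \rho = 0 \) and \( \rho = 1 \). A Frobenius analysis at each endpoint, together with the requirement \( h \in \Hone \simeq H^1_\rad(\B^3) \times L^2_\rad(\B^3) \), shows that for \( \operatorname{Re} \lambda > 0 \) a nontrivial admissible solution exists only when \( \lambda = 1 \), and then \( h_1 \) must be constant. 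Combined with the direct check that \( \g = (2,3) \) solves \( L \g = \g \), this gives \( \sigma(L) \subseteq \{ \operatorname{Re} z \leq 0 \} \cup \{1\} \), \( 1 \in \sigma_p(L) \), and that the geometric eigenspace at \( 1 \) is \( \langle \g \rangle \).

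Since \( 1 \) is an isolated point of \( \sigma(L) \) with finite algebraic multiplicity, define the Riesz projection \( P := \tfrac{1}{2\pi i} \oint_{|z-1| = r} (z-L)^{-1} \, dz \) for small \( r > 0 \). It is bounded, commutes with the resolvent and hence with \( S(\tau) \), and its range is the generalized eigenspace \( \bigcup_{k\geq 1} \ker(L-1)^k \). Checking that \( (L-1) h = \g \) has no solution in \( \Hone \) — again via the Frobenius analysis — rules out a Jordan block, so \( \operatorname{ran} P = \langle \g \rangle \). On this one-dimensional \( S(\tau) \)-invariant subspace, \( L \) acts as the scalar \( 1 \), which forces \( S(\tau) \g = e^\tau \g \), that is, \( S(\tau) P = e^\tau P \).

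The main obstacle is the remaining bound \( \| S(\tau)(I-P) \|_{\Hone \to \Hone} \lesssim 1 \): the spectral inclusion \( \sigma\big( L|_{\operatorname{ran}(I-P)} \big) = \sigma(L) \setminus \{1\} \subseteq \{ \operatorname{Re} z \leq 0 \} \) does not by itself control the semigroup, because of the failure of the spectral mapping theorem for general \( C_0 \)-semigroups. On the Hilbert space \( \Hone \) the Gearhart--Pr\"uss theorem reduces it to the uniform resolvent bound \( \sup \{ \| (z - L|_{\operatorname{ran}(I-P)})^{-1} \| \colon \operatorname{Re} z > 0 \} < \infty \). Away from \( z = 1 \) this follows from the factorization above, but one needs it \emph{up to the imaginary axis}, where the Hille--Yosida bound for \( (z-\Lz)^{-1} \) degenerates; closing this gap requires quantitative resolvent estimates for the free similarity-coordinate operator \( \Lz \) itself, obtained from the explicit representation of the free flow inside the light cone. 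This is the content of \cite[Lemma 5.7]{Donninger17}, and once those estimates are in place the claimed uniform bound follows.
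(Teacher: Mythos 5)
The paper does not actually prove this lemma --- it is quoted wholesale from \cite[Proposition 2.3 and Lemma 5.7]{Donninger17} --- so the comparison is with Donninger's argument. Your first three steps are a faithful sketch of it: bounded (in fact compact) perturbation, the factorization \( z-L=(z-\Lz)(I-(z-\Lz)^{-1}\Lp) \) plus analytic Fredholm to confine the spectrum in the right half-plane to isolated eigenvalues of finite multiplicity, the reduction to the hypergeometric-type ODE (your second-order equation is the correct one), the eigenfunction \( \g=(2,3) \) at \( \lambda=1 \), and the Riesz projection together with the exclusion of a Jordan block. One understatement: ``a Frobenius analysis at each endpoint'' does not by itself give mode stability; the Frobenius indices only classify local behaviors, and one must solve a connection problem (the solution analytic at \( \rho=0 \) must be shown to be inadmissible at \( \rho=1 \) unless \( \lambda=1 \)), which Donninger handles via explicit hypergeometric identities.

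The genuine gap is your final step. You propose to get \( \|S(\tau)(I-P)\|\lesssim 1 \) from Gearhart--Pr\"uss, i.e.\ from a uniform resolvent bound on \( \{\operatorname{Re}z>0\} \) for \( L \) restricted to \( \operatorname{ran}(I-P) \). On a Hilbert space that hypothesis is \emph{equivalent} to uniform exponential stability of the restricted semigroup, and no such decay is available here: in the norm \eqref{re:eq_hone}, free waves whose data are concentrated in a thin shell near the light-cone boundary retain norm of order one up to similarity times \( \tau\sim\log(1/\mathrm{width}) \), so \( \|S_0(\tau)\|\sim 1 \) with no decay, and the compact potential \( \Lp \) and the rank-one projection \( P \) do not create decay --- the sharp statement is exactly the uniform boundedness in the lemma. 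Hence, by the necessity direction of Gearhart--Pr\"uss, the resolvent of \( L|_{\operatorname{ran}(I-P)} \) is \emph{not} uniformly bounded on the open right half-plane: the criterion you plan to verify is false, not merely difficult, and the degeneration of the Hille--Yosida bound near the imaginary axis that you flag cannot be ``closed.'' This is precisely why the bound is nontrivial: spectral information alone (spectrum in the closed left half-plane away from the unstable mode) does not control the semigroup. In \cite{Donninger17} the uniform bound is instead obtained from the explicit construction of the resolvent and a Laplace-inversion representation of \( S(\tau)(I-P) \), with oscillatory-integral estimates along a contour approaching the imaginary axis --- the same machinery behind the Strichartz estimates quoted in Proposition \ref{re:prop_deterministic_strichartz} --- rather than from an abstract stability theorem. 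So your sketch is sound up to that point, but the last reduction must be replaced by those quantitative resolvent/kernel estimates for the semigroup itself.
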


The unstable mode \( \g \) corresponds to the time-translation invariance of \eqref{nlw}. As a result, it does not correspond to a physical instability but is instead an artifact of working with a fixed \( T>0 \) in \eqref{re:eq_similarity}. We also recall the following deterministic Strichartz estimates from \cite{Donninger17}. 

\begin{proposition}[{\cite[Theorem 4.1]{Donninger17}}]\label{re:prop_deterministic_strichartz}
Let  \( 2\leq q,p \leq \infty \) satisfy the scaling condition \( \frac{1}{q}+\frac{3}{p}= \frac{1}{2} \). Then, we have the bound
\begin{equation}
\| (S(\tau) (I-P) \fb_0)_1\|_{L_\tau^q L_y^p([0,\infty)\times \B^3))} \lesssim \| (I-P) \fb_0 \|_{\Hone} 
\end{equation}
for all \( \fb_0 \in \Hone \). In addition, we have 
\begin{equation}
\Big \| \int_0^\tau \big(S(\tau-\sigma) (I-P) \hb (\sigma) \big)_1 \dsigma  \Big\|_{L_\tau^q L_y^p( [0,\infty) \times \B^3)} \lesssim \| (I-P) \hb \|_{L_\tau^1 \mathcal{H}_\rad^1([0,\infty)\times \B^3)} 
\end{equation}
for all \( \hb \in C_\tau \mathcal{H}_\rad^1([0,\infty)\times \B^3) \medcap L_\tau^1 \mathcal{H}_\rad^1([0,\infty)\times \B^3) \). 
\end{proposition}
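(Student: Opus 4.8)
The plan is to reduce the estimates in similarity coordinates to the classical Strichartz estimates for the free wave equation on \( \R\times\R^3 \), and then to incorporate the zeroth-order potential \( \tfrac{15}{4}\phi_1 \) perturbatively using the spectral information in Lemma \ref{re:lem_semigroup}. First I would observe that the inhomogeneous estimate follows from the homogeneous one: by Minkowski's integral inequality, together with the fact that the substitution \( \tau \mapsto \tau-\sigma \) does not increase the \( L^q_\tau([0,\infty)) \)-norm,
\begin{equation*}
\Big\| \int_0^\tau \big( S(\tau-\sigma)(I-P)\hb(\sigma)\big)_1 \dsigma \Big\|_{L_\tau^q L_y^p([0,\infty)\times\B^3)} \leq \int_0^\infty \big\| \big( S(\cdot)(I-P)\hb(\sigma)\big)_1 \big\|_{L_\tau^q L_y^p([0,\infty)\times\B^3)} \dsigma,
\end{equation*}
and the homogeneous bound controls the inner norm by \( \|(I-P)\hb(\sigma)\|_{\mathcal{H}_\rad^1} \), whose \( \sigma \)-integral is \( \|(I-P)\hb\|_{L_\tau^1\mathcal{H}_\rad^1} \). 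It therefore suffices to prove the homogeneous estimate.

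For the free flow \( S_0 \) the crucial point is a change of variables. Given radial \( \fb_0\in\Hone\simeq H^1_\rad(\B^3)\times L^2_\rad(\B^3) \), I would extend it to radial data on \( \R^3 \) with comparable \( \dot{H}^1(\R^3)\times L^2(\R^3) \)-norm, let \( v \) solve the free wave equation on \( \R\times\R^3 \), and restrict to the lightcone \( \CT \) with \( T=1 \); by finite speed of propagation the restriction depends only on \( \fb_0 \), and in the similarity variables \eqref{re:eq_similarity}--\eqref{re:eq_psi} its first component is exactly \( (S_0(\tau)\fb_0)_1 \). Undoing the change of variables one computes \( \mathrm{d}t = e^{-\tau}\,\mathrm{d}\tau \) and \( \|v(t)\|_{L^p_x(\{|x|\leq 1-t\})} = e^{(\frac12-\frac3p)\tau}\|\psi(\tau)\|_{L^p_y(\B^3)} \), so that
\begin{equation*}
\| v\|_{L^q_t L^p_x(\CT)}^q = \int_0^\infty e^{((\frac12-\frac3p)q-1)\tau}\,\|\psi(\tau)\|_{L^p_y(\B^3)}^q\,\mathrm{d}\tau.
\end{equation*}
The scaling condition \( \tfrac1q+\tfrac3p=\tfrac12 \) is precisely what makes the exponent \( (\tfrac12-\tfrac3p)q-1 \) vanish, so that the change of variables is an isometry from \( L^q_t L^p_x(\CT) \) onto \( L^q_\tau L^p_y([0,\infty)\times\B^3) \). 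The classical radial Strichartz estimate on \( \R\times\R^3 \) then gives the homogeneous bound for \( S_0 \); by the Minkowski argument above, \( S_0 \) also satisfies the inhomogeneous estimate with forcing in \( L^1_\sigma\mathcal{H}_\rad^1 \).

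To pass from \( S_0 \) to \( S \) I would use the Duhamel formula \( \Phi(\tau) = S_0(\tau)(I-P)\fb_0 + \int_0^\tau S_0(\tau-\sigma)\Lp\Phi(\sigma)\dsigma \) with \( \Phi(\tau):=S(\tau)(I-P)\fb_0 \) and \( \Lp\Phi = (0,\tfrac{15}{4}\Phi_1) \); the first component of the first term is handled by the previous step. The hard part will be the Duhamel term. Because \( S(\tau)(I-P) \) is merely bounded and does not decay (Lemma \ref{re:lem_semigroup}), the forcing \( \Lp\Phi \) is only in \( L^\infty_\sigma\mathcal{H}_\rad^1 \) and not in \( L^1_\sigma\mathcal{H}_\rad^1 \), so the inhomogeneous estimate for \( S_0 \) proved above does not close the argument and a soft Neumann iteration fails. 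Instead one must work with an inhomogeneous Strichartz estimate for \( S_0 \) whose forcing is measured in a dual-Strichartz norm, which permits a bootstrap in the Strichartz space itself; the non-smallness of the potential is then absorbed by a Fredholm-type argument that uses the compactness of \( \Lp \) on \( \Hone \) and the fact, from Lemma \ref{re:lem_semigroup}, that the only spectrum of \( L \) in the closed right half-plane is the eigenvalue \( 1 \), which has been removed by \( (I-P) \). This last step --- showing that the potential responsible for the unstable mode \( \g \) does not destroy the spacetime integrability once \( \g \) has been projected out --- is the main obstacle, and I would carry it out following \cite[Section 4]{Donninger17}.
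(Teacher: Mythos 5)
Your two reductions are correct: the Minkowski/translation argument legitimately derives the inhomogeneous bound from the homogeneous one (precisely because the forcing is measured in $L_\tau^1\mathcal{H}^1_{\rad}$ rather than a dual Strichartz norm), and the transference of the free flow is right --- extend the data off $\B^3$, use finite speed of propagation, and observe that under $\frac{1}{q}+\frac{3}{p}=\frac{1}{2}$ the similarity change of variables is an isometry between $L_t^qL_x^p(\CT)$ and $L_\tau^qL_y^p([0,\infty)\times\B^3)$, so the classical radial Strichartz estimates (including the $L_t^2L_x^\infty$ radial endpoint) give the bound for $S_0(\tau)$. This transference is the same mechanism the paper itself uses for the random linear evolution in Proposition \ref{prob:prop_main}. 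Note, however, that the paper offers no proof of the present proposition at all: it is imported verbatim from \cite[Theorem 4.1]{Donninger17}, and the reorganization described in Section \ref{section:comparison} is designed exactly so that these bounds can be used as a black box.

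The genuine gap is the passage from $S_0$ to $S$, which is the entire analytic content of the statement, and your proposal does not supply it. You correctly diagnose why the naive iteration fails: Lemma \ref{re:lem_semigroup} gives only $\|S(\tau)(I-P)\|_{\Hone\to\Hone}\lesssim 1$ with no decay, so $\Lp\Phi$ is merely $L_\tau^\infty\mathcal{H}^1_{\rad}$ and the $L^1$-forcing estimate for $S_0$ cannot close on $[0,\infty)$ (had the lemma provided exponential decay of $S(\tau)(I-P)$, your scheme would work, but it does not, and no such decay is available in this topology). The repair you sketch --- a dual-norm inhomogeneous estimate for $S_0$ plus a ``Fredholm-type'' absorption resting on compactness of $\Lp$ and the location of $\sigma(L)$ --- is not a proof and is not how \cite{Donninger17} proceeds: compactness of $\Lp$ on $\Hone$ and the spectral information already encoded in Lemma \ref{re:lem_semigroup} control the semigroup in $\Hone$ but give no a priori handle on spacetime norms over an infinite time interval, there is no small parameter or decay with which to run a bootstrap, and a Fredholm alternative presupposes exactly the kind of a priori estimate in the Strichartz space that one is trying to prove. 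Donninger treats the potential non-perturbatively: the resolvent of the full operator $L$ is constructed explicitly (the spectral ODE is solvable in closed form), the semigroup $S(\tau)(I-P)$ is represented by inverse Laplace transform along a vertical contour, and the $L_\tau^qL_y^p$ bounds are extracted from oscillatory-integral estimates on the resulting kernels. Deferring this step to ``Section 4 of \cite{Donninger17}'' is deferring the theorem itself, so as written your argument proves only the easy reductions around the quoted result.
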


\subsection{The Duhamel integral}\label{section:duhamel}
By using the one-parameter semi-group \( S(\tau) \), we can rewrite \eqref{re:eq_phi_N_nlw} in integral form as 
\begin{equation}\label{re:eq_duhamel}
\phib(\tau) = S(\tau) \phib_0^T + \int_0^\tau S(\tau-\sigma) \mathcal{N}( \phi_1, f^{T,\omega}) (\sigma) \dsigma,
\end{equation}
where
\begin{equation}
\phib_0^T  = \begin{pmatrix} T^{\frac{1}{2}} u^{(1)}(0,T\cdot) - \kappa \\ T^{\frac{3}{2}} \partial_t u^{(1)}(0,T\cdot) - \frac{\kappa}{2} \end{pmatrix} \qquad \text{and} \qquad  \mathcal{N}( \phi_1, f^{T,\omega})  = \begin{pmatrix} 0 \\ N(\phi_1,f^{T,\omega}) \end{pmatrix}.
\end{equation}
Due to the unstable mode \( \g \), however, we cannot use a contraction argument to solve \eqref{re:eq_duhamel}. Recall that the reason for this instability is that we have not determined the blowup time \( T \) yet. To circumvent this prolem, the following two-step procedure was used in \cite{Donninger17,DS12}: 
\begin{itemize}[leftmargin=0.8cm]
\item[(i)] Solve a modified version of \eqref{re:eq_duhamel} in which the unstable mode has been removed.
\item[(ii)] Choose the blowup time \( T \) so that the modified and original Duhamel integrals coincide.
\end{itemize}
In the remainder of this subsection, we only formulate the modified version of \eqref{re:eq_duhamel}. The contraction argument and the choice of the blowup time \( T \) are postponed until Section \ref{section:nonlinear}. \\
We first split the right-hand side of \eqref{re:eq_duhamel} by using \( I = (I-P)+ P \). We further decompose the unstable component by writing 
\begin{align*}
&P \Big[ S(\tau) \phib_0^T + \int_0^\tau S(\tau-\sigma) \nc \dsigma \Big] \\
&= e^{\tau} P \phib_0^T + \int_0^\tau e^{\tau-\sigma} P \nc \dsigma \\
&= e^{\tau} P \Big[ \phib_0^T + \int_0^\infty e^{-\sigma} \nc \dsigma \Big] - \int_\tau^\infty e^{\tau-\sigma} P \nc \dsigma. 
\end{align*}
Using this decomposition, the two-step procedure takes the following form: 
\begin{itemize}[leftmargin=0.8cm]
\item[(i)] For each time \( T \in [\frac{1}{2},\frac{3}{2}]\), solve the integral equation 
\begin{equation*}
\phib(\tau) = S(\tau) (1-P) \phib_0^T + \int_0^\tau S(\tau-\sigma) (1-P) \nc \dsigma - \int_\tau^\infty e^{\tau-\sigma} P \nc \dsigma. 
\end{equation*}
\item[(ii)] Choose the blowup time \( T \in [\frac{1}{2},\frac{3}{2}] \) such that 
\begin{equation*}
P \Big[ \phib_0^T + \int_0^\infty e^{-\sigma} \nc \dsigma \Big]  = 0. 
\end{equation*}
\end{itemize}
\subsection{Order of the different changes of variables} \label{section:comparison}
Before we continue our discussion of the ODE-blowup, it is instructive to briefly discuss the stability (along a Lipschitz manifold of codimension one) of solitons as in \cite{Beceanu14,KM19,KNS15}. In \cite{KM19}, Kenig and Mendelson study the random data Cauchy problem
\begin{equation}\label{re:eq_soliton_cauchy}
\begin{cases} 
- \partial_t^2 u(t,x) + \Delta u(t,x)= - u(t,x)^5 \qquad  (t,x) \in \R \times \R^3, \\
u(0)=u_1^\omega, ~ \partial_t u (0)=  u_2^\omega,
\end{cases}
\end{equation}
where \( (u_1^\omega,u_2^\omega) \) is a random perturbation (along a Lipschitz manifold of codimension one) of the ground state \( (W,0) \), cf. \cite[Theorem 1.5 and Definition 4.1]{KM19}. After setting \( W_a(x):= a^{\frac{1}{4}} W(a^{\frac{1}{2}}x) \), the authors first linearize the evolution around a modulated soliton by writing 
\begin{equation}
u(t,x):= W_{a(t)}(x) + v(t,x). 
\end{equation}
This leads to a new evolution equation for the nonlinear component \( v \), which is given by
\begin{equation}\label{re:eq_soliton_cauchy_2}
\begin{cases} 
- \partial_t^2 v + ( \Delta + 5 W^4) v = \partial_t^2 W_{a(t)} + 5 (W^4-W_{a(t)}^4) v + N(v,W_{a(t)}) \qquad  (t,x) \in \R \times \R^3, \\
v(0)=u_1^\omega-W, ~ \partial_t u (0)=  u_2^\omega-\dot{a}(0) \partial_a W_a . 
\end{cases}
\end{equation}
Then, Kenig and Mendelson apply the Bourgain-Da Prato-Debussche trick to \eqref{re:eq_soliton_cauchy_2}, which requires probabilistic Strichartz estimates for the linear wave equation with potential 
\begin{equation*}
-\partial_t^2 F + ( \Delta + 5 W^4) F=0. 
\end{equation*}
Due to the potential \( 5W^4\), the probabilistic Strichartz estimate require delicate kernel estimates and form one of the main contributions of \cite{KM19}. \\

In an earlier version of this paper \cite{Bringmann20}, we used a similar approach to prove the stability of the ODE-blowup under random pertubations. We first switched into similarity coordinates, then linearized around the ODE-blowup, and finally used the Bourgain-Da Prato-Debussche trick. As in the soliton-setting, our earlier approach required probabilistic Strichartz estimate for the one-parameter semigroup \( S(\tau)\). They were obtained using delicate oscillatory integral estimates, which formed the main technical contribution of \cite{Bringmann20}. In the current version of this paper, however, we perform the changes of variables in a different order. We first use the Bourgain-Da Prato-Debussche trick, then switch into similarity coordinates, and finally perturb around the ODE-blowup. As a result, this only requires probabilistic Strichartz estimates for the propagator of the free wave equation instead of \( S(\tau) \). In particular, while we still require the (deterministic) Strichartz estimates for \( S(\tau)\) from \cite{Donninger17} to close the nonlinear argument, they can now be used as a blackbox. In the earlier version of the argument, we had to revisit parts of their proof. 

In the current approach, however, one picks up an additional term, which was not present in the earlier version. It corresponds to the linear term \( 5 \kappa^4 f^{T,\omega} \) in \eqref{re:eq_N}, and we therefore require that with high probability 
\begin{equation}\label{re:eq_compare}
\| f^{T,\omega} \|_{L_\tau^1 L_y^2([0,\infty)\times \B^3)} = \| (T-t)^{-2} f^\omega \|_{L_t^1 L_x^2(\CT)} \lesssim \delta. 
\end{equation}
In many problems involving dispersive partial differential equations, such as the stability of solitons, the analogue of \eqref{re:eq_compare} fails. This is often a consequence of the spatial and time-translation invariance of the equation. 
In our setting, however, the spatial and time-translation invariance are (partially) broken due to the compactness of the cone \( \CT \). In fact, a simple application of Sobolev embedding already yields for all \( T\in [\frac{1}{2},\frac{3}{2}] \) and \( s > 3/2 \) that 
\begin{equation*}
\| (T-t)^{-2} e^{\pm it |\nabla|} f_1 \|_{L_t^1 L_x^2(\CT)} \lesssim \| (T-t)^{-2} \|_{L_t^1 L_x^2(\CT)} \| e^{\pm it|\nabla|} f_1 \|_{L_t^\infty L_x^\infty(\R\times \R^3)} \lesssim  \| f_1 \|_{H_x^{s}(\R^3)}. 
\end{equation*}
A suitable probabilistic refinement (Proposition \ref{prob:prop_main}) then leads to \eqref{re:eq_compare}. \\

\section{Probabilistic Strichartz estimates in similarity coordinates}\label{section:strichartz}

In this section, we prove probabilistic Strichartz estimates for the linear wave equation in similarity coordinates.

\begin{proposition}\label{prob:prop_main}
Let \( \fb_0 =(f_1,f_2)  \) be spherically symmetric, let \( f^{T,\omega} \) be as in \eqref{re:eq_f_similarity}, \( 1 \leq r < \infty\), and let \( 1 \leq q,p \leq \infty \). We have the probabilistic Strichartz estimate
\begin{equation}\label{prob:eq_main}
\big\| \sup_{T\in [\frac{1}{2},\frac{3}{2}]} \| f^{T,\omega} \|_{L_\tau^q L_y^p([0,\infty)\times \B^3)} \big \|_{L_\omega^r(\Omega)} \lesssim_{q,p,s} \sqrt{r} \| \fb_0 \|_{\Hs}. 
\end{equation}
under either of the following two conditions: 
\begin{flalign}
\frac{1}{q}+\frac{3}{p} = \frac{1}{2} \qquad &\text{and} \qquad s > 1 - \frac{1}{q} - \frac{3}{p} + \frac{1}{\min(p,q)} & \label{prob:eq_condition_1}\\
\hspace{5ex} \text{or} \hspace{20ex} p \leq 6 \qquad &\text{and} \qquad s > \frac{2}{3}. \label{prob:eq_condition_2}&
\end{flalign}
Furthermore, if \( \fb_0 \in \mathcal{H}^{\frac{7}{10}+}_{\rad}(\R^3) \), then the map
\begin{equation}\label{prob:eq_continuity}
\Big[\frac{1}{2},\frac{3}{2} \Big]\rightarrow (L_\tau^1 L_y^2\medcap  L_\tau^5 L_y^{10})([0,\infty)\times \B^3), \quad T \mapsto f^{T,\omega}
\end{equation}
is almost surely continuous. 
\end{proposition}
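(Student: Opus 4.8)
The plan is to prove the estimate \eqref{prob:eq_main} by reducing it to the probabilistic Strichartz estimates in Cartesian coordinates from \cite{Bringmann18}, together with a change-of-variables argument that converts the supremum over $T$ into a controlled error. First I would record the explicit relation between the norms in similarity coordinates and those in the lightcone: writing out $f^{T,\omega}(\tau,\rho) = (T-t)^{1/2} f^\omega(t,x)$ and computing the Jacobian of the change of variables $(\tau,y) \mapsto (t,x)$ (noting $d\tau = (T-t)^{-1} dt$ and $dy = (T-t)^{-3} dx$) shows that
\begin{equation*}
\| f^{T,\omega} \|_{L_\tau^q L_y^p([0,\infty)\times \B^3)} = \| (T-t)^{\alpha(q,p)} f^\omega \|_{L_t^q L_x^p(\CT)},
\end{equation*}
for the appropriate exponent $\alpha(q,p) = \tfrac12 + \tfrac1q + \tfrac3p$ coming from the weight and the two Jacobian factors (one should double-check the arithmetic, and observe that under the scaling condition \eqref{prob:eq_condition_1} one has $\alpha = 1$, matching \eqref{re:eq_compare} up to the sign convention there). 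Thus the claim becomes a weighted probabilistic Strichartz estimate in Cartesian coordinates on the cone, with the weight $(T-t)$ bounded above and below on $[0,T]$ since $T \in [\tfrac12,\tfrac32]$.

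Next I would handle the supremum over $T$. Since $f^\omega$ is a fixed function (independently of $T$) and the cones $\CT$ are nested and comparable, the weight $(T-t)$ ranges in $[0,2]$, and one can bound $\sup_{T} (T-t)^\alpha \le 2^\alpha$ pointwise when $\alpha \ge 0$; the genuine issue is only the dependence of the \emph{region} $\CT$ on $T$, which is handled by enlarging to $\mathcal{C}_{3/2}$. So it suffices to prove, for fixed $\alpha > 0$,
\begin{equation*}
\big\| \| \langle t \rangle^{\alpha} \, \chi_{\mathcal{C}_{3/2}}\, f^\omega \|_{L_t^q L_x^p} \big\|_{L_\omega^r} \lesssim \sqrt{r}\, \| \fb_0 \|_{\Hs},
\end{equation*}
where the temporal weight is now harmless because it is bounded on the compact $t$-interval $[0,3/2]$. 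This in turn follows from Khintchine's inequality applied to the randomization (Definition \ref{intro:definition_randomization}), combined with the unit-scale annular decomposition: one interpolates between an $L^2_x$-based bound (using Lemma \ref{prelim:lemma_hs_norm} and the energy/Sobolev conservation of the free flow), which costs no derivatives, and the dispersive Strichartz gain on each annulus, exactly as in \cite{Bringmann18}. The two cases \eqref{prob:eq_condition_1} and \eqref{prob:eq_condition_2} correspond to the two regimes treated there — the first being the sharp scaling-admissible range where the annular multiplier bounds on radial functions give the full gain, the second being the non-admissible but lower-integrability range $p \le 6$ where one trades a weaker Strichartz pair against the same $s > 2/3$ threshold.

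For the continuity statement \eqref{prob:eq_continuity}, I would argue that $T \mapsto f^{T,\omega}$ is almost surely continuous into $L_\tau^1 L_y^2 \cap L_\tau^5 L_y^{10}$ by a standard two-step scheme: first, for the truncated sum $f^{T,\omega}_{\le N}$ (keeping only the first $N$ annuli), the map $T \mapsto f^{T,\omega}_{\le N}$ is continuous in these norms because the underlying function is smooth and the change of variables depends smoothly on $T$ on the compact parameter interval; second, one shows the tails $f^{T,\omega} - f^{T,\omega}_{\le N}$ converge to zero in $L_\omega^r$ uniformly in $T$, using exactly the estimate \eqref{prob:eq_main} applied to the high-frequency data $\fb_0^{>N}$, whose $\Hs$-norm tends to zero as $N \to \infty$ since $\fb_0 \in \mathcal{H}^{7/10+}$ and $7/10 > 2/3$; a Borel–Cantelli argument along a subsequence then upgrades this to almost sure uniform convergence, and a uniform limit of continuous functions is continuous.

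I expect the main obstacle to be the first reduction step done cleanly — verifying that the Jacobian and weight bookkeeping genuinely produces the weight exponent $\alpha$ for which $\alpha = 1$ under \eqref{prob:eq_condition_1}, and that the resulting Cartesian weighted Strichartz estimate is literally covered by (or a routine modification of) the estimates in \cite{Bringmann18}, rather than requiring new annular-multiplier input. The other subtle point is the threshold arithmetic: one must check that the condition $s > 1 - \tfrac1q - \tfrac3p + \tfrac1{\min(p,q)}$ is exactly what the Cartesian estimate delivers after accounting for the loss incurred by the $L_x^p$ versus $L_x^2$ trade-off on unit annuli, and that specializing to the pairs $(q,p) = (1,2)$ and $(5,10)$ (the ones needed in Theorem \ref{main_theorem}) indeed lands at the stated $s > 7/10$.
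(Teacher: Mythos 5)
There is a genuine gap, and it stems from the change-of-variables bookkeeping at the very start. With $d\tau=(T-t)^{-1}dt$ and $dy=(T-t)^{-3}dx$, the correct identity is
\begin{equation*}
\| f^{T,\omega} \|_{L_\tau^q L_y^p([0,\infty)\times \B^3)} = \big\| (T-t)^{\frac{1}{2}-\frac{1}{q}-\frac{3}{p}} f^{\omega} \big\|_{L_t^q L_x^p(\CT)},
\end{equation*}
not the exponent $\tfrac12+\tfrac1q+\tfrac3p$ you propose. Under the scaling condition \eqref{prob:eq_condition_1} this exponent is $0$ (not $1$), so the weight disappears and the estimate does reduce to the Cartesian probabilistic Strichartz estimates of \cite{Bringmann18}; this part of your plan survives. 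But for case \eqref{prob:eq_condition_2} — in particular the pair $(q,p)=(1,2)$, which is the one actually needed for Theorem \ref{main_theorem} and appears in \eqref{re:eq_compare} as the weight $(T-t)^{-2}$ — the exponent is negative, so the weight is \emph{singular} at $t=T$. Your claim that the weight is bounded on a compact time interval (and can be replaced by $\langle t\rangle^{\alpha}$) is therefore false, and your further step of enlarging the region to $\mathcal{C}_{3/2}$ destroys exactly the structure that saves the estimate: the spatial cross-section of $\CT$ shrinks like $T-t$, and Hölder in $x$ over this small ball is what compensates the $(T-t)^{-3/p}$ part of the singularity near $t=T$.

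Consequently your proposal contains no mechanism for the singular-weight case, and it cannot be "literally covered" by \cite{Bringmann18}, which contains no such weighted estimates. The paper handles \eqref{prob:eq_condition_2} by a Littlewood--Paley decomposition followed by a frequency-dependent splitting of the time interval at $T_N=T-cN^{-2/3}$: away from $t=T$ one uses the probabilistic Bernstein estimate (Lemma \ref{prob:lem_sobolev}) in $L_t^\infty L_x^p$, paying $N^{1-\frac2p+}$ against the weight factor $N^{(\frac3p-\frac12)\alpha}$, while on $[T_N,T]$ one passes to $L_t^\infty L_x^\infty$ (cost $N^{1+}$) and gains $N^{-\alpha/2}$ from the weight integrated over the short interval; balancing gives $\alpha=2/3$ and the threshold $s>2/3$. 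Some such argument exploiting both the frequency localization and the shrinking cone is indispensable, and it is the main missing idea in your write-up. Your continuity argument (smooth truncations plus uniform convergence of the tails via the main estimate) is essentially the paper's, but it, too, rests on having \eqref{prob:eq_main} for the weighted pair $(1,2)$, so it inherits the gap.
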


\begin{remark}\label{prob:rem_probabilistic_gain}
In the deterministic setting, Donninger \cite{Donninger17} relied on the \( L_\tau^2 L_y^\infty\)-Strichartz estimate to solve the nonlinear Cauchy problem. Unfortunately, the radial randomization does not lead to a probabilistic gain in the \( L_\tau^2 L_y^\infty\)-Strichartz estimate. This problem already occurs in Cartesian coordinates and is discussed in \cite[Remark 3.2]{Bringmann18}. Instead of \( L_\tau^2L_y^\infty\), we work with the \( L_\tau^1 L_y^2 \) and \( L_\tau^2 L_y^4 \)-norms, which are sufficient to close the contraction argument but lead to the weaker conclusion in Theorem \ref{main_theorem}, see Remark \ref{remark:different_norms_theorem}. 
\end{remark}

\begin{lemma}[Probabilistic Bernstein estimate]\label{prob:lem_sobolev}
Let \( f\in L_{\rad}^2(\R^3) \) and \( N \geq 1 \). Then, it holds for all \( 2 \leq p \leq \infty \) and all \( r \geq 1 \) that 
\begin{equation}\label{prob:eq_sobolev}
\| Q_N f^\omega \|_{L_\omega^r L_x^p(\Omega \times \R^3)} \lesssim_p \sqrt{r} N^{1-\frac{2}{p}+} \| Q_N f \|_{L_x^2(\R^3)}. 
\end{equation}
We also obtain that 
\begin{equation}\label{prop:eq_linf}
\| e^{\pm it |\nabla|} Q_N f^\omega  \|_{L_\omega^r L_t^\infty L_x^p(\Omega \times [0,2] \times \R^3)} \lesssim_p \sqrt{r} N^{1-\frac{2}{p}+} \| Q_N f \|_{L^2_x(\R^3)}
\end{equation}
\end{lemma}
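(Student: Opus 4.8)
The plan is to exploit that the radial randomization merely reshuffles the dyadic annuli of width one. Writing $P_n$ for the Fourier multiplier with symbol $1_{[n,n+1)}(\|\xi\|_2)$ and $A_n:=\{\xi\in\R^3\colon n\leq \|\xi\|_2<n+1\}$, we have $Q_N f^\omega=\sum_{n\geq 0}X_n\,Q_N P_n f$, and only $\lesssim N$ indices $n$ contribute (those with $A_n\cap\operatorname{supp}\chi_N\neq\emptyset$). After reducing to Schwartz $f$ --- approximate $f$ in $L^2$, pass to the limit using Lemma~\ref{prelim:lemma_hs_norm}, Fatou, and the linearity of $f\mapsto Q_N f^\omega$ --- the bound \eqref{prob:eq_sobolev} splits into a probabilistic step, which replaces the random series by the square function $\big(\sum_n|Q_N P_n f|^2\big)^{1/2}$, and a deterministic Bernstein estimate on a single annulus of radius $\sim N$ and thickness $\sim 1$.

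For the probabilistic step one would first establish the following randomized Bernstein inequality: for all $g_n\in L^p(\R^3)$ with $2\leq p\leq\infty$, all $1\leq r<\infty$, and $G:=\sum_n X_n g_n$,
\begin{equation*}
\|G\|_{L_\omega^r L_x^p}\lesssim\sqrt{\max(r,p)}\,\big\|\big(\textstyle\sum_n|g_n|^2\big)^{1/2}\big\|_{L_x^p}.
\end{equation*}
When $p\leq r$ and $p<\infty$ this follows from Minkowski's integral inequality ($\|G\|_{L_\omega^r L_x^p}\leq\|G\|_{L_x^p L_\omega^r}$) and Khintchine's inequality applied pointwise in $x$; when $r<p$ one first uses $L_\omega^r\hookrightarrow L_\omega^p$ and Fubini to reduce to $r=p$; and the case $p=\infty$ is reduced to a large finite exponent beforehand via the frequency-localized Bernstein estimate $\|Q_N f^\omega\|_{L_x^\infty}\lesssim_\varepsilon N^{\varepsilon}\|Q_N f^\omega\|_{L_x^{3/\varepsilon}}$. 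For the deterministic step, Cauchy--Schwarz on the Fourier side gives $\|Q_N P_n f\|_{L_x^\infty}\leq(2\pi)^{-3/2}|A_n|^{1/2}\|Q_N P_n f\|_{L_x^2}$, and since $|A_n|\lesssim N^2$, interpolation with the trivial $L^2$ bound yields $\|Q_N P_n f\|_{L_x^p}\lesssim N^{1-\frac2p}\|Q_N P_n f\|_{L_x^2}$. Feeding these into $Q_N f^\omega=\sum_n X_n Q_N P_n f$, then using Minkowski ($L_x^p(\ell_n^2)\hookrightarrow\ell_n^2(L_x^p)$ since $p\geq 2$) and the orthogonality $\sum_n\|Q_N P_n f\|_{L_x^2}^2=\|Q_N f\|_{L_x^2}^2$, yields \eqref{prob:eq_sobolev} --- in fact without the $\varepsilon$-loss, which the stated bound absorbs.

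For \eqref{prop:eq_linf}, set $F_\omega(t):=\|e^{\pm it|\nabla|}Q_N f^\omega\|_{L_x^p}$ on $[0,2]$; for Schwartz $f$ this is Lipschitz with $|F_\omega'(t)|\leq G_\omega(t):=\||\nabla|e^{\pm it|\nabla|}Q_N f^\omega\|_{L_x^p}$. With $M:=\lceil N^{5/2}\rceil$ and $t_k:=2k/M$, $0\leq k\leq M$, one has pointwise in $\omega$
\begin{equation*}
\sup_{t\in[0,2]}F_\omega(t)\leq\max_{0\leq k\leq M}F_\omega(t_k)+\frac2M\sup_{\sigma\in[0,2]}G_\omega(\sigma).
\end{equation*}
For the maximum, $F_\omega(t_k)=\|Q_N(e^{\pm it_k|\nabla|}f)^\omega\|_{L_x^p}$ with $e^{\pm it_k|\nabla|}f$ radial and $\|Q_N e^{\pm it_k|\nabla|}f\|_{L_x^2}=\|Q_N f\|_{L_x^2}$, so \eqref{prob:eq_sobolev} (applied with every exponent $r'\geq 1$) gives $\|F_\omega(t_k)\|_{\psi_2}\lesssim_p N^{1-\frac2p+}\|Q_N f\|_{L_x^2}$, and Lemma~\ref{prelim:lemma_maximum} only costs a factor $\sqrt{\log(2+M)}\lesssim N^{0+}$. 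For the error term, one bounds $G_\omega$ crudely by the triangle inequality in $n$, applies Bernstein in $x$ on each annulus together with $\||\nabla|Q_N P_n f\|_{L_x^2}\lesssim N\|Q_N P_n f\|_{L_x^2}$ (unitarity of the propagator on $L_x^2$ removes the $\sigma$-dependence), and uses Cauchy--Schwarz over the $\lesssim N$ relevant indices; this gives $\big\|\sup_\sigma G_\omega(\sigma)\big\|_{L_\omega^r}\lesssim_p\sqrt r\,N^{\frac52-\frac2p}\|Q_N f\|_{L_x^2}$, which the factor $\frac2M\lesssim N^{-5/2}$ renders harmless. Adding the two contributions and recalling (by continuity in $t$) that $\|e^{\pm it|\nabla|}Q_N f^\omega\|_{L_\omega^r L_t^\infty L_x^p}=\|\sup_t F_\omega(t)\|_{L_\omega^r}$ completes the proof.

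I expect the only genuinely delicate point to be the supremum over the continuum of times in \eqref{prop:eq_linf}: bounding the oscillation of $F_\omega$ directly in terms of $\sup_\sigma G_\omega$ would be circular, so instead one samples at a number of times that is a fixed polynomial power of $N$ --- harmless because the sub-gaussian maximal inequality in Lemma~\ref{prelim:lemma_maximum} pays only a logarithmic factor in that number --- and estimates the oscillation over each short subinterval crudely, allowing an extra power of $N$ from $\partial_t=\pm i|\nabla|$, which is then defeated by the length of the subinterval. The deterministic thin-annulus Bernstein inequality and the identity $Q_N f^\omega=\sum_n X_n Q_N P_n f$ are the remaining ingredients.
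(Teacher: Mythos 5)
Your proof of \eqref{prob:eq_sobolev} is correct and is essentially the paper's argument: reduce by Minkowski and Khintchine to a square function, pass from \( L_x^p\ell_n^2 \) to \( \ell_n^2L_x^p \) (using \( p\geq 2 \)), apply a deterministic Bernstein bound on a unit-width annulus of radius \( \lesssim N \) together with \( \ell_n^2 \)-orthogonality, and handle \( p=\infty \) by first dropping to a large finite exponent via deterministic Bernstein, which is exactly where the \( N^{0+} \) comes from. The one local difference is the single-annulus bound: you get \( \|Q_NP_nf\|_{L_x^\infty}\lesssim N\|Q_NP_nf\|_{L_x^2} \) by Cauchy--Schwarz on the Fourier side, which requires no radiality, whereas the paper derives the same \( L^2_{\rad}\to L^p_{\rad} \) operator bound from the radial sine-transform formula \eqref{pre:eq_bessel_transform}; both give \( N^{1-\frac{2}{p}} \) after interpolation, so nothing is lost or gained at the exponents used here. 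For \eqref{prop:eq_linf} the paper only remarks that the same argument applies up to minor issues at \( q=\infty \) and refers to \cite[Lemma 3.7]{Bringmann18}; your explicit completion --- sampling \( \sim N^{5/2} \) times, invoking the sub-gaussian maximal bound of Lemma \ref{prelim:lemma_maximum} at only logarithmic cost, and beating the crude oscillation bound \( \sqrt{r}\,N^{5/2-\frac{2}{p}}\|Q_Nf\|_{L^2_x} \) with the mesh size \( N^{-5/2} \) --- is a legitimate substitute for that citation and is of the same flavor. Two cosmetic points: for \( r<p \) the embedding on a probability space runs \( L_\omega^p\hookrightarrow L_\omega^r \) (higher moments dominate), not the direction you wrote, which is precisely the paper's ``H\"older in \( \omega \)''; and at the sampled times \( e^{\pm it_k|\nabla|}f \) is complex-valued, so either note that \eqref{prob:eq_sobolev} and its proof apply verbatim to complex radial data (Khintchine allows complex coefficients) or split the propagator into \( \cos(t_k|\nabla|) \) and \( \sin(t_k|\nabla|)|\nabla|^{-1}|\nabla| \)-type real pieces acting on \( f \).
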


\begin{remark}\label{remark:optimality}
Except for the endpoints \( p=2,\infty \), one can likely improve \eqref{prob:eq_sobolev} and \eqref{prop:eq_linf} through a more detailed analysis of annular Fourier multipliers (see e.g. \cite{Chanillo84}). As can be seen from the proof of Proposition \ref{prob:prop_main} below, however, this would only improve the regularity condition in \eqref{prob:eq_condition_2} and hence does not affect the main theorem. 
\end{remark}

\begin{proof}[Proof of Lemma \ref{prob:lem_sobolev}:]
For any \( n \in \N \) satisfying \( n\sim N \), we first prove the operator bound
\begin{equation}\label{prob:eq_sobolev_operator}
\| 1_{[n,n+1]}(|\nabla|) \|_{L_\rad^2(\R^3)\rightarrow L_\rad^p(\R^3)} \lesssim N^{1-\frac{2}{p}}.  
\end{equation}
The estimate for \( p=2 \) follows directly from Plancherell's theorem. We now treat the case \( p=\infty \). To this end, let \( f \in L^2_\rad(\R^3) \) and assume that \( \supp \widehat{f} \subseteq \{ \xi \colon \|\xi\|_2 \in [n,n+1] \}\). Using \eqref{pre:eq_bessel_transform}, we have that 
\begin{equation}
|f(r)|\lesssim r^{-1} \int_{n}^{n+1} |\sin(r\nu)| |\widehat{f}(\nu)| \nu \dnu \lesssim   \int_n^{n+1} |\widehat{f}(\nu)| \nu^2 \dnu \lesssim N \| \nu \widehat{f} \|_{L^2_\nu([0,\infty))} = N \| f\|_{L_x^2(\R^3)}. 
\end{equation}
The general case \( 2 \leq p \leq \infty \) then follows from Hölder's inequality. We now proceed with the proof of \eqref{prob:eq_sobolev}. For any \( 2 \leq p < \infty \) and \( r \geq p \), it follows from Minkowski's integral inequality, Khintchine's inequality, and the operator bound \eqref{prob:eq_sobolev_operator} that
\begin{align*}
&\| Q_N f^\omega \|_{L_\omega^r L_x^p} = \| \sum_{n\sim N} X_n 1_{[n,n+1]}(|\nabla|) Q_N f \|_{L_\omega^r L_x^p} 
\leq\| \sum_{n\sim N} X_n 1_{[n,n+1]}(|\nabla|) Q_N f\|_{ L_x^p L_\omega^r}  \\
&\lesssim \sqrt{r} \| 1_{[n,n+1]}(|\nabla|)Q_N f \|_{L_x^p \ell_n^2}
\lesssim \sqrt{r} \| 1_{[n,n+1]}(|\nabla|)Q_N f \|_{ \ell_n^2 L_x^p}
\lesssim \sqrt{r} N^{1-\frac{2}{p}}\| 1_{[n,n+1]}(|\nabla|) Q_N f\|_{ \ell_n^2 L_x^2} \\
&\lesssim \sqrt{r} N^{1-\frac{2}{p}} \| Q_N f \|_{L_x^2}.
\end{align*}
The restriction to \( r \geq p \) can then be removed by using Hölder's inequality in \( \omega \). If \( p=\infty \), we let \( 2\leq \widetilde{p} < \infty\) and obtain from the (deterministic) Bernstein inequality that 
\begin{equation}
\| Q_N f^\omega \|_{L^\infty(\R^3)} \lesssim N^{\frac{3}{\widetilde{p}}} \| Q_N f^\omega \|_{L^{\widetilde{p}}(\R^3)}
\end{equation}
By choosing \( \widetilde{p} \) sufficiently large, the case \( p=\infty \) in \eqref{prob:eq_sobolev} then follows from the same estimate for \( p <\infty \). Except for minor technical difficulties due to \( q=\infty\), the second estimate \eqref{prop:eq_linf} follows from the same argument. We refer to \cite[Lemma 3.7]{Bringmann18} for a detailed exposition of a similar argument. 
\end{proof}

\begin{proof}[Proof of Proposition \ref{prob:prop_main}:]
We first switch from similarity coordinates back into Cartesian coordinates. After a change of variables, we see that
\begin{equation}
\| f^{T,\omega}(\tau,y) \|_{L_\tau^q L_y^p([0,\infty)\times \B^3)} = \| (T-t)^{\frac{1}{2}-\frac{1}{q}-\frac{3}{p}} f^{\omega}(t,x) \|_{L_t^q L_x^p(\CT)} . 
\end{equation}
Assuming the scaling condition from \eqref{prob:eq_condition_1}, i.e.,  \( \frac{1}{q}+\frac{3}{p} = \frac{1}{2}\), we obtain that
\begin{equation*}
\sup_{T\in [\frac{1}{2},\frac{3}{2}]} \| f^{T,\omega}(\tau,y) \|_{L_\tau^q L_y^p([0,\infty)\times \B^3)} \leq \| f^\omega(t,x) \|_{L_t^q L_x^p([0,2]\times \R^3)}. 
\end{equation*}
The estimate then follows from \cite[Lemma 3.4 with \( \gamma=1\)]{Bringmann18}. The case \( (q,p)=(2,\infty) \) is not explicitly addressed there, but can be obtained by using Bernstein's inequality to exit \( L_x^\infty\) and using Hölder's inequality in time. \\

We now turn to the proof of the estimate under condition \eqref{prob:eq_condition_2}. After performing a Littlewood-Paley decomposition in \( N \) and losing a factor of \( N^{0+} \), we may assume that \( \fb_0 \) is frequency-localized on the dyadic scale \( N \). Let \( T_N := T - c N^{-\alpha} \), where \( \alpha>0 \) remains to be chosen. Using Hölder's inequality in the spatial variables, we have that 
\begin{equation}\label{prob:eq_proof_main_1}
\begin{aligned}
&\| (T-t)^{\frac{1}{2}-\frac{1}{q}-\frac{3}{p}} f^\omega(t,x) \|_{L_t^q L_x^p(\CT) } \\
&\leq \| 1_{[0,T_N]}(t) ~(T-t)^{\frac{1}{2}-\frac{1}{q}-\frac{3}{p}} f^\omega(t,x) \|_{L_t^q L_x^p(\CT)} 
+ \| 1_{[T_N,T]}(t) ~ (T-t)^{\frac{1}{2}-\frac{1}{q}-\frac{3}{p}} f^\omega(t,x) \|_{L_t^q L_x^p(\CT)} \\
&\leq  \| (T-t)^{\frac{1}{2}-\frac{1}{q}-\frac{3}{p}}\|_{L_t^q([0,T_N])} \cdot \| f^\omega(t,x) \|_{L_t^\infty L_x^p([0,2]\times \R^3)} \\
&+ \| (T-t)^{\frac{1}{2}-\frac{1}{q}}\|_{L_t^q([T_N,T])} \cdot \| f^\omega(t,x) \|_{L_t^\infty L_x^\infty([0,2]\times \R^3)}.
\end{aligned}
\end{equation}
Using \( p \leq 6 \), we obtain that 
\begin{equation*}
\| (T-t)^{\frac{1}{2}-\frac{1}{q}-\frac{3}{p}}\|_{L_t^q([0,T_N])} \lesssim (T-T_N)^{\frac{1}{2}-\frac{3}{p}} \lesssim N^{(\frac{3}{p}-\frac{1}{2}) \alpha}
\end{equation*}
and 
\begin{equation*}
 \| (T-t)^{\frac{1}{2}-\frac{1}{q}}\|_{L_t^q([T_N,T])} \lesssim (T-T_N)^{\frac{1}{2}} \lesssim N^{-\frac{\alpha}{2}}. 
\end{equation*}
By inserting this into \eqref{prob:eq_proof_main_1} and using Lemma \ref{prob:lem_sobolev}, it follows that 
\begin{align*}
&\big\| \sup_{T\in [\frac{1}{2},\frac{3}{2}]} \| f^{T,\omega} \|_{L_\tau^q L_y^p([0,\infty)\times \B^3)} \big \|_{L_\omega^r(\Omega)} \\
&\lesssim  N^{(\frac{3}{p}-\frac{1}{2})\alpha} \| f^\omega(t,x) \|_{L_r^\omega L_t^\infty L_x^p(\Omega \times [0,2] \times \R^3)} 
+ N^{-\frac{\alpha}{2}}\| f^\omega(t,x) \|_{L_r^\omega L_t^\infty L_x^\infty(\Omega \times [0,2] \times \R^3)} \\
&\lesssim \sqrt{r} N^{1-\frac{\alpha}{2}+} ( N^{\frac{3\alpha-2}{p}}+1)~ \| \fb_0 \|_{\mathcal{H}^0_\rad(\R^3)}. 
\end{align*}
The estimate then follows by choosing \( \alpha = 2/3 \). \\

Finally, the continuity statement \eqref{prob:eq_continuity} follows from the previous estimates and a softer argument. Indeed, for initial data with frequency support inside a single dyadic scale \(N \), the continuity follows from the continuity of 
\begin{equation}\label{prob:eq_continuity_aux}
\Big[\frac{1}{2},\frac{3}{2} \Big] \rightarrow L_t^q L_x^p( \R \times \R^3), ~ T \mapsto (T-t)^{\frac{1}{2}-\frac{1}{q}-\frac{3}{p}} 1_{\CT}(t,x). 
\end{equation}
The continuity of \eqref{prob:eq_continuity_aux} can be seen most easily through uniform bounds in \( L_t^{\widetilde{q}} L_x^{\widetilde{p}} \) for \( (\widetilde{q},\widetilde{p})=(q+,p+)\), truncating the integrands at large values, and the usual \( \epsilon/3\)-argument.  Due to the \eqref{prob:eq_main} and the strict inequality for \( s \), the sum over dyadic scales converges absolutely in \( L_\tau^q L_y^p \) for \( (q,p) \in \{ (1,2), (5,10)\} \) and hence preserves the continuity in \( T \). 
\end{proof}

\section{The nonlinear problem}\label{section:nonlinear}

In this section, we perform the two-step procedure from Section \ref{section:duhamel}. This section is similar to the nonlinear analysis in \cite{Donninger17,DS12}. 

\subsection{The modified integral equation}

For any \( \phib_0 \in \Hone \) and any spherically symmetric function \( f \colon [0,\infty) \times \B^3 \rightarrow \R \), we define the operator
\begin{equation}
\K( \phib ) = (1-P) \Big( S(\tau) \phib_0 + \int_0^\tau S(\tau-\sigma) \mathcal{N}(\phi_1,f)(\sigma) \dsigma \Big) - P \int_\tau^\infty e^{\tau-\sigma} \mathcal{N}(\phi_1,f)(\sigma) \dsigma. 
\end{equation}
We also define the norm 
\begin{equation}
\| \phib \|_{\mathcal{Z}} := \| \phib \|_{C_\tau \mathcal{H}^1_{\rad}([0,\infty)\times \B^3)} + \| \phi_1 \|_{L_\tau^2 L_y^\infty([0,\infty)\times \B^3)}.
\end{equation}
For any \( \delta >0 \), we define the corresponding \( \delta\)-ball by 
\begin{equation}
\mathcal{Z}_\delta:= \Big\{ \phi \in C_\tau \mathcal{H}^1_\rad([0,\infty)\times \B^3)\colon \| \phib \|_{\mathcal{Z}}\leq \delta \Big\}. 
\end{equation}
Using Sobolev embedding and Hölder's inequality, we also obtain that 
\begin{equation*}
\| \phi_1 \|_{L_\tau^5 L_y^{10}([0,\infty)\times \B^3)} \lesssim \| \phi_1 \|_{L_\tau^\infty L_y^6([0,\infty)\times \B^3)}^{\frac{3}{5}}\| \phi_1 \|_{L_\tau^2 L_y^\infty([0,\infty)\times \B^3)}^{\frac{2}{5}} \lesssim\| \phib \|_{\mathscr{Z}}. 
\end{equation*}

\begin{lemma}[Solution of the modified integral equation]\label{nl:lem_contraction}
Let \( 0 < \delta,c \leq 1 \) be sufficiently small. Assume that \( \phib_0 \in \Hone \) and \( f \colon [0,\infty)\times \B^3 \rightarrow \R \) satisfy
\begin{equation}\label{nl:eq_smallness_condition}
\| \phib_0 \|_{\Hone} \leq \delta \qquad \text{and} \qquad \| f \|_{L_\tau^1 L_y^2([0,\infty)\times \B^3)}, \| f \|_{L_\tau^5 L_y^{10}([0,\infty)\times \B^3)}\leq c \delta. 
\end{equation}
Then, there exists an absolute constant \( C \geq 1 \)  and a unique solution \( \phib \in \mathcal{Z}_{C\delta} \) of \( \phib = \K (\phib) \). In addition, the data-to-solution map from \( (\phib_0, f) \) satisfying \eqref{nl:eq_smallness_condition} to \( \phib \) is continuous. Furthermore, the solution satisfies the estimate
\begin{equation}\label{nl:eq_estimate_N}
\| \mathcal{N}(\phi_1,f) \|_{\LH} \lesssim c\delta + \delta^2. 
\end{equation}
\end{lemma}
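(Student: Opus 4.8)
The plan is to solve $\phib = \K(\phib)$ by a standard contraction mapping argument on the complete metric space $\mathcal{Z}_{C\delta}$ with the metric induced by $\|\cdot\|_{\mathcal{Z}}$, where $C\geq 1$ is an absolute constant to be fixed. The linear Strichartz estimates of Proposition \ref{re:prop_deterministic_strichartz} control the various terms of $\K(\phib)$ in the norm $\|\cdot\|_{\mathcal{Z}}$: the term $(1-P)S(\tau)\phib_0$ is bounded by $\|\phib_0\|_{\Hone}\leq\delta$ (using both the $C_\tau\Hone$-bound from Lemma \ref{re:lem_semigroup} and the $L^2_\tau L^\infty_y$ inhomogeneous Strichartz estimate with the admissible pair $(q,p)=(2,\infty)$ for the homogeneous part); the Duhamel term $(1-P)\int_0^\tau S(\tau-\sigma)\mathcal{N}(\phi_1,f)\,d\sigma$ is bounded by $\|(1-P)\mathcal{N}(\phi_1,f)\|_{\LH}$; and the unstable-mode correction $-P\int_\tau^\infty e^{\tau-\sigma}\mathcal{N}(\phi_1,f)\,d\sigma$ is bounded, using $\|S(\tau)P\|\lesssim e^\tau$ and the integrability of $e^{\tau-\sigma}$ on $[\tau,\infty)$, again by $\|\mathcal{N}(\phi_1,f)\|_{L^1_\tau\Hone}$. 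So everything reduces to estimating $\mathcal{N}(\phi_1,f)=(0,N(\phi_1,f))$ in $L^1_\tau \mathcal{H}^1_{\rad}$, i.e.\ $N(\phi_1,f)$ in $L^1_\tau H^1_{\rad}(\B^3)$.

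The core nonlinear estimate is therefore $\|N(\phi_1,f)\|_{L^1_\tau H^1_{\rad}([0,\infty)\times\B^3)}\lesssim c\delta + \delta^2$ for $\phib\in\mathcal{Z}_{C\delta}$ and $f$ satisfying \eqref{nl:eq_smallness_condition}, together with the corresponding difference estimate. I would go term by term through the expansion \eqref{re:eq_N}. The purely $f$-dependent terms $(\kappa+f)^5-\kappa^5$ produce contributions that are linear through quintic in $f$; using $\kappa\sim 1$, these are controlled by $\|f\|_{L^1_\tau H^1}+\cdots+\|f\|_{L^5_\tau W^{1,10}}^5$-type quantities, and here the hypothesis $\|f\|_{L^1_\tau L^2_y}, \|f\|_{L^5_\tau L^{10}_y}\leq c\delta$ gives the $c\delta$ on the right (one must be a little careful that $f=f^{T,\omega}$ is genuinely in $H^1$ spatially — this is where Proposition \ref{prob:prop_main} is applied with $H^1$-admissible exponents, or one works with the weaker $L^2$- and $L^{10}$-based norms and uses that products with the smooth profile $\kappa$ stay in the right class). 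The terms involving $\phi_1$ — namely $5[(\kappa+f)^4-\kappa^4]\phi_1$, $10(\kappa+f)^3\phi_1^2$, \dots, $\phi_1^5$ — are estimated using Hölder in $y$ combined with Sobolev embedding $H^1(\B^3)\hookrightarrow L^6$ and the extra $L^2_\tau L^\infty_y$ control of $\phi_1$ built into $\|\phib\|_{\mathcal{Z}}$; the pure powers $\phi_1^k$ with $k\geq 2$ yield the $\delta^2$ (the quadratic term being the worst), while the $f$-weighted linear-in-$\phi_1$ terms yield $c\delta\cdot C\delta$, absorbed into $\delta^2$ after shrinking $c$. The differences $N(\phi_1,f)-N(\widetilde{\phi}_1,f)$ are handled by the same Hölder/Sobolev scheme applied to the telescoped differences, producing a Lipschitz bound with constant $\lesssim c+\delta$, which is $<1$ once $c,\delta$ are small; this gives both the contraction and, combined with the linear data-dependence, the continuity of the data-to-solution map.

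The main obstacle is the spatial $H^1$ regularity of the nonlinearity: one must differentiate products like $(\kappa+f)^3\phi_1^2$ and keep every resulting term in $L^1_\tau L^2_y(\B^3)$, which forces a careful distribution of derivatives and the use of the endpoint-type pairs. Since $f=f^{T,\omega}$ is only as regular as $\fb_0\in\mathcal{H}^s$ with $s>7/10<1$, one cannot put a full derivative on $f$; instead, after differentiating, the derivative landing on a factor of $f$ must be controlled by lower-order Strichartz norms of $f$ that \emph{are} available below $H^1$. Concretely, I expect to need a small catalog of estimates of the form $\|\nabla(f^k \phi_1^j)\|_{L^1_\tau L^2_y}\lesssim (\text{Strichartz norms of }f)\cdot(\text{$\mathcal{Z}$-norm of }\phib)$ where the derivative is always moved onto $\phi_1$ (which is in $\Hone$ hence controllable) via the product rule, the $f$-derivative terms being either absent or handled by noting $f\partial\phi_1$ has the same structure; the point that makes this work — and the reason the earlier version's harder analysis is avoided — is exactly the gain \eqref{re:eq_compare}, $\|f^{T,\omega}\|_{L^1_\tau L^2_y}\lesssim\delta$, coming from the compactness of the cone. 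Once that catalog is in place, summing the finitely many terms of \eqref{re:eq_N} and choosing $C$ large then $\delta, c$ small closes the fixed-point argument and yields \eqref{nl:eq_estimate_N}.
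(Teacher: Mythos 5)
Your overall fixed-point architecture matches the paper's: bound the homogeneous term by the homogeneous Strichartz estimate of Proposition \ref{re:prop_deterministic_strichartz}, the $(I-P)$ Duhamel term by the inhomogeneous Strichartz estimate, and the $P$-correction by the boundedness of $P$ together with the integrability of $e^{\tau-\sigma}$ on $[\tau,\infty)$, then close by taking $C$ large and $c,\delta$ small. The gap is in the reduction you make afterwards: you claim that controlling $\mathcal{N}(\phi_1,f)=(0,N(\phi_1,f))$ in $L^1_\tau\mathcal{H}^1_{\rad}$ means controlling $N(\phi_1,f)$ in $L^1_\tau H^1_{\rad}(\B^3)$. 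That is not what the norm requires: since $\Hone\simeq H^1_{\rad}(\B^3)\times L^2_{\rad}(\B^3)$ and the nonlinearity enters only the \emph{second} component of the first-order system, \eqref{re:eq_hone} gives $\|\mathcal{N}(\phi_1,f)\|_{\Hone}\simeq\|N(\phi_1,f)\|_{L^2_y(\B^3)}$, so only $\|N(\phi_1,f)\|_{L^1_\tau L^2_y}$ is needed. With the pointwise bound \eqref{re:eq_estimate_N} and H\"older on the unit ball this is immediate: it is bounded by $\|f\|_{L^1_\tau L^2_y}+\|f\|_{L^5_\tau L^{10}_y}^5+\|\phi_1\|_{L^2_\tau L^4_y}^2+\|\phi_1\|_{L^5_\tau L^{10}_y}^5\lesssim c\delta+(C\delta)^2+(C\delta)^5$, using $\|\phi_1\|_{L^2_\tau L^4_y}\lesssim\|\phi_1\|_{L^2_\tau L^\infty_y}$ and the interpolation bound for $\|\phi_1\|_{L^5_\tau L^{10}_y}$ recorded just before the lemma. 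No spatial derivative ever falls on $N$.

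This matters because the route you outline for the (unnecessary) $H^1$-estimate would not close. Differentiating the terms of \eqref{re:eq_N} unavoidably produces factors carrying $\nabla f$ --- for instance $(\kappa+f)^4\nabla f$ from $(\kappa+f)^5-\kappa^5$, or $(\kappa+f)^3(\nabla f)\phi_1$ from the linear-in-$\phi_1$ term --- and these can neither be rewritten as ``$f\,\partial\phi_1$''-type terms nor handled by moving the derivative onto $\phi_1$, contrary to your claim that the $f$-derivative terms are ``absent or of the same structure.'' Since $f=f^{T,\omega}$ comes from data in $\mathcal{H}^s_{\rad}$ with $s>7/10<1$ (and by Lemma \ref{prelim:lemma_hs_norm} the randomization gains no Sobolev regularity, so almost surely $f_1^\omega\notin H^1$ when $f_1\notin H^1$), and since Proposition \ref{prob:prop_main} provides only Lebesgue-norm Strichartz bounds on $f^{T,\omega}$ itself, there is no control of $\nabla f^{T,\omega}$ available to run your proposed catalog of product estimates. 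Once the target norm is corrected to $L^1_\tau L^2_y$ for $N$, this difficulty disappears entirely and your argument collapses to the paper's proof; the remaining ingredients you list (difference estimates for the contraction and continuity of the data-to-solution map, the $L^2_\tau L^\infty_y$ bound for the $P$-term via the explicit constant mode $\g$) are in line with the paper.
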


\begin{proof}
We use a contraction mapping argument. We first prove for all \( \phib \in \mathcal{Z}_{C\delta} \) that 
\begin{equation}\label{nl:eq_estimate_nl}
\| \mathcal{N}(\phi_1,f) \|_{\LH} \lesssim c \delta + (C\delta)^2 + (C\delta)^5. 
\end{equation}
Indeed, we obtain from \eqref{re:eq_estimate_N} that 
\begin{align*}
&\| \mathcal{N}(\phi_1,f) \|_{\LH} \\
&\lesssim \| N(\phi_1,f) \|_{L_\tau^1 L_y^2([0,\infty)\times \B^3)} \\
&\lesssim \| f \|_{L_\tau^1 L_y^2([0,\infty)\times \B^3)} + \| f \|_{L_\tau^5 L_y^{10}([0,\infty)\times \B^3)}^5 + \| \phi_1 \|_{L_\tau^2 L_y^4([0,\infty)\times \B^3)}^2 + \| \phi_1 \|_{L_\tau^5 L_y^{10}([0,\infty)\times \B^3)}^5 \\
&\lesssim c \delta + (C\delta)^2 + (C\delta)^5. 
\end{align*}
We now estimate the contributions of the \( I-P \) and \( P \)-terms in \( \K \) separately. Using the homogeneous Strichartz estimate from Proposition \ref{re:prop_deterministic_strichartz}, we have that 
\begin{equation}
\| S(\tau) (I-P) \phib_0 \|_{\mathcal{Z}} \lesssim \| \phib_0 \|_{\Hone} \lesssim \delta. 
\end{equation}
By using the inhomogeneous Strichartz estimate from  Proposition \ref{re:prop_deterministic_strichartz}, we also have that 
\begin{equation}
\Big \| \int_0^\tau S(\tau-\sigma) (I-P) \mathcal{N}(\phi_1,f)(\sigma) \dsigma \Big \|_{\mathcal{Z}} \lesssim \| \mathcal{N}(\phi_1,f) \|_{\LH} . 
\end{equation}
Using the continuity of \( P \colon \Hone \rightarrow \Hone \), we also obtain 
\begin{equation}
\begin{aligned}
&\Big \| P \int_\tau^\infty e^{\tau-\sigma} \mathcal{N}(\phi_1,f)(\sigma) \dsigma \Big\|_{C_\tau \mathcal{H}^1_\rad([0,\infty)\times \B^3)} \\
&\lesssim \sup_{\tau\geq 0} \int_\tau^\infty e^{\tau-\sigma} \| \mathcal{N}(\phi_1,f) \|_{\Hone} \dsigma \lesssim \| \mathcal{N}(\phi_1,f)\|_{\LH}. 
\end{aligned}
\end{equation}
Finally, we obtain from Young's inequality and Lemma \ref{re:lem_semigroup} that 
\begin{equation}\label{re:eq_estimate_Lyinf}
\begin{aligned}
&\Big \| \Big(P \int_\tau^\infty e^{\tau-\sigma} \mathcal{N}(\phi_1,f)(\sigma) \dsigma \Big)_1 \Big\|_{L_\tau^2 L_y^\infty([0,\infty)\times \B^3)} \\
&\lesssim \Big \| \int_0^\infty 1_{(-\infty,0]}(\tau-\sigma) e^{\tau-\sigma}  \| \mathcal{N}(\phi_1,f)(\sigma)\|_{\Hone} \dsigma \Big\|_{L_\tau^2([0,\infty))} \cdot \| g_1 \|_{L_y^\infty(\B^3)} \\
&\lesssim \| 1_{(0,\infty]}(\tau) e^{\tau} \|_{L_\tau^2(\R)} \| \mathcal{N}(\phi_1,f) \|_{\LH} \lesssim  \| \mathcal{N}(\phi_1,f) \|_{\LH} . 
\end{aligned}
\end{equation}
By combining \eqref{nl:eq_estimate_nl}-\eqref{re:eq_estimate_Lyinf}, we obtain for all \( \phib\in \mathcal{Z}_{C\delta} \) that 
\begin{equation*}
\| \K ( \phib ) \|_{\mathcal{Z}} \lesssim \delta + c \delta + (C\delta)^2 + (C\delta)^5. 
\end{equation*}
The self-mapping property then follows by first taking \( C \geq 1 \) large enough to absorb the implicit constant and \( 0 < c,\delta\leq 1 \) sufficiently small. Once we constructed the solution, \eqref{nl:eq_estimate_nl} also implies \eqref{nl:eq_estimate_N}.\\

A standard modification of this argument shows for any  radial \( f,f^\prime \in (L_\tau^1 L_y^2 \medcap L_\tau^5 L_y^{10})([0,\infty)\times \B^3) \), \( \phib_0,\phib_0^\prime \in \Hone \) satisfying \eqref{nl:eq_smallness_condition}, and 
\( \phib,\phib^\prime \in \mathcal{Z}_{C\delta} \)  that
\begin{equation*}
\begin{aligned}
&\| \K (\phib) - K_{\phib_0^\prime, f^\prime} (\phib^\prime) \|_{\mathcal{Z}} \\
&\leq \frac{1}{2} \| \phib - \phib^\prime \|_{\mathcal{Z}} + C^\prime \Big( \| \phib_0 - \phib_0^\prime \|_{\Hone} + \| f - f^\prime \|_{L_\tau^1 L_y^2([0,\infty)\times \B^3)} +  \| f - f^\prime \|_{L_\tau^5 L_y^{10}([0,\infty)\times \B^3)} \Big). 
\end{aligned}
\end{equation*}
This implies the existence of a unique fixed point and the continuity of the data-to-solution map. 
\end{proof}

\subsection{\protect{On the blowup time $T$}}

In this subsection, we show that we can choose a blowup time \( T \) such that the modified and original Duhamel integrals coincide. 

\begin{lemma}[Choice of \( T \)]\label{nl:lem_choice_T}
Let \( 0 < \delta, c \leq 1 \) be sufficiently small. For any \( T \in [1-\delta,1+\delta] \), let \( f^T\colon [0,\infty)\times \B^3 \rightarrow \R \) be spherically symmetric. We assume the smallness condition 
\begin{equation}
\sup_{T\in [1-\delta,1+\delta]}  \big( \| f^T \|_{L_\tau^1 L_y^2([0,\infty)\times \B^3)} +  \| f^T \|_{L_\tau^5 L_y^{10}([0,\infty)\times \B^3)}  \big) \leq c\delta
\end{equation}
and the continuity of 
\begin{equation}
[1-\delta,1+\delta]\rightarrow (L_\tau^1 L_y^2 \medcap L_\tau^5 L_y^{10})([0,\infty)\times \B^3), ~ T \mapsto f^T. 
\end{equation}
Furthermore, let 
\begin{equation}
\phib_0^T := \begin{pmatrix} T^{\frac{1}{2}} u^{(1)}(0,T\cdot) - \kappa \\ T^{\frac{3}{2}} \partial_t u^{(1)}(0,T\cdot) - \frac{\kappa}{2} \end{pmatrix}.
\end{equation}
Then, there exists a unique fixed point \( \phib^T \in \mathcal{Z}_{C\delta} \) satisfying \( \phib^T = K_{\phib_0^T,f^T}(\phib^T) \). Furthermore, there exists a time \( T \in [1-\delta,1+\delta] \) such that
\begin{equation}\label{nl:eq_choice_T}
P \Big[  \phib_0^T + \int_0^\infty e^{-\sigma} \mathcal{N}(\phi_1^T,f^T)(\sigma) \dsigma \Big] =0. 
\end{equation}
\end{lemma}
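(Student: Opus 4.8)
The plan is to split the statement into two parts: the existence and uniqueness of the fixed point \( \phib^T \) for each \( T \), and then the existence of a root of the unstable-mode equation \eqref{nl:eq_choice_T}. For the first part I would simply invoke Lemma \ref{nl:lem_contraction}: for each fixed \( T \in [1-\delta,1+\delta] \) the data \( (\phib_0^T, f^T) \) satisfies the smallness condition \eqref{nl:eq_smallness_condition}. Indeed, since \( u^{(1)}(0,x) = \kappa \) and \( \partial_t u^{(1)}(0,x) = \kappa/2 \) are constant, a direct computation gives \( T^{\frac{1}{2}} u^{(1)}(0,T\cdot) - \kappa = (T^{1/2}-1)\kappa \) and \( T^{\frac{3}{2}} \partial_t u^{(1)}(0,T\cdot) - \kappa/2 = (T^{3/2}-1)\kappa/2 \), so \( \| \phib_0^T \|_{\Hone} \lesssim |T-1| \lesssim \delta \), which after possibly shrinking \( \delta \) absorbs the implicit constant. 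Combined with the hypothesized smallness of \( \| f^T \|_{L_\tau^1 L_y^2} \) and \( \| f^T \|_{L_\tau^5 L_y^{10}} \), Lemma \ref{nl:lem_contraction} yields the unique \( \phib^T \in \mathcal{Z}_{C\delta} \).

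For the second part, I would define the scalar function
\begin{equation*}
F(T) := \Big\langle \phib_0^T + \int_0^\infty e^{-\sigma} \mathcal{N}(\phi_1^T,f^T)(\sigma) \dsigma, \, \g \Big\rangle_{\Hone},
\end{equation*}
identifying the one-dimensional range of \( P \) with \( \R \) via the spanning vector \( \g \). The goal is to find \( T \in [1-\delta,1+\delta] \) with \( F(T) = 0 \), which I would do by the intermediate value theorem, so I need (a) continuity of \( F \) and (b) a sign change at the endpoints. Continuity of \( F \) follows from the hypothesized continuity of \( T \mapsto f^T \), the continuity of \( T \mapsto \phib_0^T \) (clear from the explicit formula), the continuity of the data-to-solution map asserted in Lemma \ref{nl:lem_contraction} (hence \( T \mapsto \phi_1^T \) is continuous), and the continuity of \( \mathcal{N} \) in its arguments together with the dominated-convergence-type bound coming from \eqref{nl:eq_estimate_N}; the \( e^{-\sigma} \) weight makes the \( \sigma \)-integral harmless.

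The heart of the argument — and the main obstacle — is establishing the sign change, i.e.\ showing that the leading-order term in \( F(T) \) is the data term \( P\phib_0^T \), which depends monotonically (to leading order) on \( T \), while the nonlinear correction is a genuine perturbation. Concretely, \( P\phib_0^T = c_0 (T-1)\g + O(|T-1|^2) \) for some nonzero constant \( c_0 \) (computed from \( \langle (\kappa/2)((T^{1/2}-1), (T^{3/2}-1)) , \g\rangle \), which to first order is \( \sim (T-1) \)), so \( F(1-\delta) \) and \( F(1+\delta) \) have the sizes \( \mp c_0 \delta + O(\delta^2) \). Meanwhile the nonlinear piece is bounded by \( \| \mathcal{N}(\phi_1^T,f^T)\|_{\LH} \lesssim c\delta + \delta^2 \) via \eqref{nl:eq_estimate_N}, which for \( c \) small enough and \( \delta \) small enough is strictly smaller in absolute value than \( |c_0|\delta/2 \). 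Hence \( F(1-\delta) \) and \( F(1+\delta) \) have opposite signs, and the intermediate value theorem produces the desired \( T \). (This soft argument is exactly the one used in \cite{Donninger17,DS12}; the only new input is checking that the probabilistic forcing term \( f^T \) enters only through the controlled norm \eqref{nl:eq_estimate_N}, which was already arranged in Lemma \ref{nl:lem_contraction}.)
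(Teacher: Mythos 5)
Your overall strategy is the same as the paper's: for each fixed \(T\) apply Lemma \ref{nl:lem_contraction} to get \(\phib^T\), then scalarize the unstable-mode condition and run an intermediate value argument in which the data term contributes \(\tfrac{\kappa}{4}(T-1)\) to leading order while the Duhamel term is an error of size \(O(c\delta+\delta^2)\) controlled through \eqref{nl:eq_estimate_N} and the boundedness of \(P\). However, your scalarization contains a genuine misstep: you set \(F(T):=\langle \phib_0^T+\int_0^\infty e^{-\sigma}\mathcal{N}(\phi_1^T,f^T)\,\dsigma,\,\g\rangle_{\Hone}\) and treat \(F(T)=0\) as equivalent to \eqref{nl:eq_choice_T}. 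That equivalence holds only if \(P\) is the \emph{orthogonal} projection onto \(\langle \g\rangle\). Here \(P\) is the spectral (Riesz) projection commuting with \(S(\tau)\) for the non-self-adjoint generator \(L\) (Lemma \ref{re:lem_semigroup}), and there is no reason for it to be orthogonal; in general \(\ker P\neq \g^{\perp}\), so a zero of your \(F\) need not satisfy \(P[\cdots]=0\). The repair is cheap and is exactly what the paper does: since \(P\) has one-dimensional range, write \(P\phib=\lambda(\phib)\g\) with \(\lambda\) a bounded linear functional, and apply the IVT to \(\lambda(\phib_0^T+\int_0^\infty e^{-\sigma}\mathcal{N}\,\dsigma)\); using \(P\g=\g\) and \(\partial_T\phib_0^T|_{T=1}=\tfrac{\kappa}{4}\g\), the paper writes \(P[\cdots]=(\tfrac{\kappa}{4}(T-1)+a(T))\g\) with \(|a(T)|\lesssim c\delta+\delta^2\), and all of your size and continuity estimates carry over verbatim to this formulation.

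A second, smaller inaccuracy: to invoke Lemma \ref{nl:lem_contraction} you need \(\|\phib_0^T\|_{\Hone}\leq \delta\), and your claim that \(\|\phib_0^T\|_{\Hone}\lesssim |T-1|\leq\delta\) can be fixed ``after possibly shrinking \(\delta\)'' does not work as stated: if the implicit constant were \(C_0>1\), then at \(T=1\pm\delta\) you only get \(C_0\delta\), and shrinking \(\delta\) does not change this ratio since \(T\) ranges over the full interval \([1-\delta,1+\delta]\). The paper closes this by an explicit computation showing the constant is in fact below one, namely \(\partial_T\phib_0^T|_{T=1}=\tfrac{\kappa}{4}\g\) and \(\tfrac{\kappa}{4}\|\g\|_{\Hone}=\tfrac{\kappa}{4}\sqrt{7}<1\); alternatively you could apply Lemma \ref{nl:lem_contraction} with \(\delta\) replaced by \(C_0\delta\), but one of these two steps must be made explicit. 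With these two corrections your argument coincides with the paper's proof.
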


\begin{proof}
We first study the variation of the initial data in \( T \). From the definition of the ODE-blowup and explicit formula for the unstable mode \( \g \) from Lemma \ref{re:lem_semigroup}, we obtain that  
\begin{equation}\label{nl:eq_variation_T_initial_data}
\phib_0^T = \kappa \begin{pmatrix} T^{\frac{1}{2}} - 1 \\  \frac{1}{2} (T^{\frac{3}{2}}-1) \end{pmatrix} \qquad \text{and} \qquad \partial_T \phib_0^T\Big|_{T=1} = \frac{\kappa}{4} \begin{pmatrix} 2\\ 3 \end{pmatrix} = \frac{\kappa}{4} \g. 
\end{equation} 
Recalling the definition of the \( \Hone \)-norm from \eqref{re:eq_hone} , we have that
\begin{equation*}
 \frac{\kappa}{4} \| \g \|_{\Hone} = \frac{\kappa}{4} \sqrt{3+4}< 1. 
\end{equation*}
This implies that \( \| \phib_0^T \|_{\Hone} \leq \delta\) for all \( T \in [1-\delta,1+\delta] \). Using Lemma \ref{nl:lem_contraction}, we then obtain the existence and uniqueness of \( \phib^T \). Since \( P \g = \g \), we also obtain from \eqref{nl:eq_variation_T_initial_data} and Lemma \ref{nl:lem_contraction} that 
\begin{equation}
P \Big[  \phib_0^T + \int_0^\infty e^{-\sigma} \mathcal{N}(\phi_1^T,f^T)(\sigma) \dsigma \Big] = \Big( \frac{\kappa}{4} (T-1) +a(T) \Big) \g, 
\end{equation}
where \( a \colon [1-\delta,1+\delta] \rightarrow \R \) is a continuous function satisfying 
\begin{equation*}
|a(T)| \lesssim (T-1)^2 +c \delta +\delta^2 \lesssim  c \delta+ \delta^2. 
\end{equation*}
If \( 0<c,\delta\leq 1 \) are sufficiently small, we see that the continuous function
\begin{equation}\label{nl:eq_function}
\big[ 1-\delta, 1+\delta \big] \rightarrow \R, ~ T \mapsto \frac{\kappa}{4} (T-1) + a(T)
\end{equation}
is negative at \( T = 1-\delta \) and positive at \( T =1+\delta \). Using the intermediate value theorem, we then see that \eqref{nl:eq_function} has a zero, which implies \eqref{nl:eq_choice_T} for some \( T \in [1-\delta,1+\delta]\). 
\end{proof}

\subsection{Proof of the main theorem}
By collecting the previous results, we now obtain a short proof of Theorem \ref{main_theorem}.

\begin{proof}[Proof of Theorem \ref{main_theorem}]
By Lemma \ref{prelim:lemma_tail} and Proposition \ref{prob:prop_main}, we have the estimate 
\begin{equation*}
\sup_{T\in[\frac{1}{2},\frac{3}{2}]} \| f^{T,\omega} \|_{L_\tau^1 L_y^2([0,\infty)\times \B^3)}, \sup_{T\in[\frac{1}{2},\frac{3}{2}]} \| f^{T,\omega} \|_{L_\tau^5 L_y^{10}([0,\infty)\times \B^3)} \leq c \delta 
\end{equation*}
and the continuity of 
\begin{equation*}
\Big[ \frac{1}{2}, \frac{3}{2}\Big] \rightarrow \big( L_\tau^1 L_y^2 \medcap L_\tau^5 L_y^{10} \big)([0,\infty)\times \B^3) , T \mapsto f^{T,\omega} 
\end{equation*}
for an event \( \Omega_{\fb_0,c,\delta} \subseteq \Omega \) with probability
\begin{equation}
\mathbb{P}( \Omega_{\fb_0,c,\delta}) \geq 1 - 2 \exp\Big( - c^\prime \frac{\delta^2}{\| \fb_0\|_{\Hs}^2}\Big).
\end{equation}
By using Lemma \ref{nl:lem_choice_T}, we obtain a time \( T \in [1-\delta,1+\delta] \) and a function \( \phib^T \in \mathcal{Z}_{C\delta} \) satisfying 
\begin{equation*}
\phib^T(\tau) = S(\tau) (1-P) \phib_0^T + \int_0^\tau S(\tau-\sigma) (1-P) \mathcal{N}(\phi_1^T,f^{T,\omega}) \dsigma - \int_\tau^\infty e^{\tau-\sigma} P \mathcal{N}(\phi_1^T,f^{T,\omega}) \dsigma. 
\end{equation*}
and 
\begin{equation*}
P \Big[ \phib_0^T + \int_0^\infty e^{-\sigma} \mathcal{N}(\phi_1^T,f^{T,\omega}) \dsigma \Big]  = 0. 
\end{equation*}
From the reformulation of the Cauchy problem in Section \ref{section:reformulation}, we see that the solution \( u \) of \eqref{rnlw} in \( \CT \) is given in similarity coordinates by \( \kappa + f^{T,\omega} + \phi_1^T \). Since \( \kappa \) corresponds to the ODE-blowup, it follows that 
\begin{align*}
&\| (T-t)^{-\frac{3}{4}} (u-u^{(T)}) \|_{L_t^2L_x^4(\CT)} + \| u - u^{(T)} \|_{L_t^5 L_x^{10}(\CT)} \\
&= \| f^{T,\omega} + \phi_1^{T} \|_{L_\tau^2 L_y^4([0,\infty)\times \B^3)} +  \| f^{T,\omega} + \phi_1^{T} \|_{L_\tau^5 L_y^{10}([0,\infty)\times \B^3)} \lesssim \delta. 
\end{align*}
This proves \eqref{intro:eq_bound} and hence the main theorem. 
\end{proof}

\renewcommand*{\bibfont}{\small}
\bibliography{Blowup}
\bibliographystyle{hplain}

\Addresses
\end{document}